\documentclass{amsart}
\usepackage{amsfonts}
\usepackage{amssymb,amsmath,amsfonts,verbatim}
\usepackage{graphics}
\usepackage{epsfig}
\usepackage{color}
\usepackage{hyperref}
\usepackage{bbm}

\theoremstyle{plain} 
    \newtheorem{theorem}{Theorem}
    \newtheorem{lemma}[theorem]{Lemma}
    \newtheorem{proposition}[theorem]{Proposition}
    \newtheorem{corollary}[theorem]{Corollary}

\theoremstyle{definition} 
    \newtheorem{definition}[theorem]{Definition}

    \newtheorem{remark}[theorem]{Remark}

\theoremstyle{remark}


\def\l{\left}
\def\r{\right}
\def\<{\langle}
\def\>{\rangle}

\def\mb{\mbox}
\newcommand{\E}{\mbox{\bf E}}

\def\bar{\overline}
\def\P{{\bf P}}

\def\given{\left.\vphantom{\hbox{\Large (}}\right|}



\newcommand\mnote[1]{} 
\newcommand\be{\begin{equation*}}
\newcommand\ee{\end{equation*}}

\newcommand\ben{\begin{equation}}
\newcommand\een{\end{equation}}
\newcommand\bes{\begin{eqnarray*}}
\newcommand\ees{\end{eqnarray*}}

\newcommand{\F}{\langle\alpha,\beta\rangle}
\newcommand{\FF}{\langle\alpha_1,\dots,\alpha_k\rangle}
\newcommand{\N}{\mathbb{N}}

\def\mb{\mbox}
\def\\ell{\left}
\def\r{\right}

\def\lam{\lambda}
\def\alp{\alpha}


\linespread{1.4}

\usepackage{palatino}

\author{Siddhartha Gadgil}
\email{gadgil@iisc.ac.in}

\author{Manjunath Krishnapur}
\email{manju@iisc.ac.in}

\address{	Department of Mathematics\\
		Indian Institute of Science\\
		Bangalore 560012, India}
		
\thanks{The first author is partially supported by the SERB-EMR grant EMR/2016/006049. The second author is partially supported by the SERB-MATRICS grant MTR2017/000292.   Both authors are partially supported by the UGC centre for advanced studies.}		

\title[Random words and non-crossing matchings]{Random words in free groups, non-crossing matchings and  RNA secondary structures}

\begin{document}
\bibliographystyle{abbrv}
\begin{abstract}
  Consider a random word $X^n=(X_1,\ldots ,X_n)$ in an alphabet consisting of $4$ letters, with the letters viewed either as $A$, $U$, $G$ and $C$ (i.e., nucleotides in an RNA sequence) or $\alpha$, $\bar{\alpha}$, $\beta$ and $\bar{\beta}$ (i.e., generators of the free group  $\F$ and their inverses).  We show that the expected fraction $\rho(n)$ of unpaired bases in an optimal RNA secondary structure (with only Watson-Crick bonds and no pseudo-knots) converges to a constant $\lambda_2$ with $0<\lambda_2<1$ as $n\to\infty$. Thus, a positive proportion of the bases of a random RNA string do not form hydrogen bonds. We do not know the exact value of $\lambda_2$, but we derive upper and lower bounds for it.

  In terms of free groups, $\rho(n)$ is the ratio of the length of the shortest word representing $X$ in the generating set consisting of conjugates of generators and their inverses to the word length of $X$ with respect to the standard generators and their inverses. Thus for a typical word the word length in the (infinite) generating set consisting of the conjugates of standard generators grows linearly with the word length in the standard generators. In fact, we show that a similar result holds for all non-abelian finitely generated free groups $\FF$, $k\geq 2$.

\end{abstract}
\maketitle

\section{Introduction}

Consider a word $X$ in an alphabet consisting of $4$ letters, with the letters viewed either as $\alpha$, $\bar{\alpha}$, $\beta$ and $\bar{\beta}$ (i.e., generators of the free group  $\F$ and their inverses, where we use the notation $\bar{g}$ for $g^{-1}$) or $A$, $U$, $G$ and $C$ (i.e., nucleotides in an RNA sequence).  There is a natural notion of a length $\ell(X)$ associated to such a word, which can be defined in several equivalent ways (see~\cite{Ga1} and~\cite{Ga2} for more details). We give three descriptions of $\ell$, two of which (as we indicate below) generalize to random words in $2k$ letters, for $k\geq 2$.

\begin{enumerate}
  \item If $X$ is viewed as a word in $\F$ then $\ell$ is the maximal conjugacy-invariant length function on $\F$ which satisfies $\ell(\alpha)\leq 1$ and $\ell(\beta)\leq 1$. Equivalently, $\ell$ is the word length in the generating set given by all conjugates $g\alpha g^{-1}$, $g\bar{\alpha} g^{-1}$, $g\bar{\beta} g^{-1}$ and $g\beta g^{-1}$ of the generators of $\F$ and their inverses (where $\bar{\alpha}=\alpha^{-1}$ and $\bar{\beta} = \beta^{-1}$). More generally, an arbitrary word in $2k$ letters gives an element of $\FF$, and $\ell$ can be defined as a maximal conjugacy-invariant length function (or word length in conjugates of generators and their inverses) in this case too.
  \item If $X$ is viewed as a nucleotide sequence, then we can consider so called \emph{secondary structures} of RNA~\cite{RNA}, i.e., bonds between nucleotides of the RNA, with bonds being Watson-Crick pairs, i.e. hydrogen bonds between Adenine and Uracil and between Guanine and Cytosine, and stereo-chemical forces modelled by not allowing so called \emph{pseudo-knots} (for details we refer to~\cite{Ga1}). Then $\ell(X)$ is the minimum number of non-bonded nucleotides for secondary structures of $X$. This is a biologically reasonable notion of energy.
  \item Again viewing $X=X^{(n)}$ as a word of length $n$ in the alphabet $\alpha$, $\bar{\alpha}$, $\beta$ and $\bar{\beta}$, we consider incomplete non-crossing matchings of the (indices of) letters in $X$ so that letters are matched with their inverses. Here a non-crossing matching is a set $P$ of pairs of indices $(i, j)$, $1\leq i < j\leq n$, such that
        \begin{enumerate}
          \item  each $i$ belongs to at most one element of $P$,
          \item  if $i<j<k<\ell$, then at most one of $(i, k)$ and $(j,\ell)$ belong to $P$,
          \item  if $(i, j)\in P$ then $X_i = \bar{X}_j$.
        \end{enumerate}
        The length $\ell(X)$ is the minimum number of unmatched letters over all non-crossing matchings. More generally we can take a random word in the alphabet with $2k$ letters $\alpha_1$, $\bar{\alpha}_1$, \dots, $\alpha_k$, $\bar{\alpha}_k$ (where $\bar{g}$ denotes $g^{-1})$ and consider non-crossing matchings with letters paired with their inverses, and define $\ell$ as the minimum number of unmatched letters over all non-crossing matchings.
\end{enumerate}

Henceforth, fix $k \geq 2$ and consider a \emph{random} string $X=X^{(n)}$ of length $n$ in $2k$ letters as above (i.e., a random word). The case $k = 2$ corresponds to RNA secondary structures, but most of our results and proofs are uniform in $k$. Let $L_{k}(n)=\E\l[\ell(X^{(n)})\r]$ where the expectation is over uniform distribution on strings of length $n$. Let $\rho_{k}(n)=L_{k}(n)/n$ denote the average proportion of unpaired letters.

Our main result is that this fraction converges to a positive constant.
\begin{theorem}\label{main} With the above notations, $\rho_{k}(n)\rightarrow \lam_{k}$ for some constant $0<\lambda_k<1$.
\end{theorem}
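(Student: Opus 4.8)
The plan is to establish convergence first and then prove the two strict inequalities separately, with the lower bound $\lambda_k>0$ being the crux.

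\emph{Convergence.} The quantity $\ell$ is subadditive under concatenation: given words $X,Y$, matching them separately produces a legal non-crossing matching of $XY$, so $\ell(XY)\le\ell(X)+\ell(Y)$. Taking expectations over independent uniform letters gives $L_k(m+n)\le L_k(m)+L_k(n)$, and Fekete's lemma yields $L_k(n)/n\to\inf_n L_k(n)/n=:\lambda_k\in[0,1]$. Thus $\rho_k(n)\to\lambda_k$ and it remains only to show $0<\lambda_k<1$. For the upper bound $\lambda_k<1$ I would exhibit a cheap matching: the disjoint adjacent pairs $(2i-1,2i)$, $1\le i\le\lfloor n/2\rfloor$, form a non-crossing family, and $(2i-1,2i)$ is matchable exactly when $X_{2i-1}=\bar X_{2i}$, which occurs with probability $1/(2k)$. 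Hence the expected number of matched letters is at least $2\lfloor n/2\rfloor/(2k)$, so $L_k(n)\le n-\tfrac{n}{2k}+O(1)$ and $\lambda_k\le 1-\tfrac{1}{2k}<1$.

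\emph{Lower bound.} The key is a reformulation. A word is completely matchable by a non-crossing inverse-matching if and only if it freely reduces to the identity in $\FF$: an innermost arch is an adjacent inverse pair, whose cancellation is a free reduction, and conversely the successive cancellations of a word reducing to the identity pair up positions in a nested, hence non-crossing, fashion. Applying this to the matched sub-sequence shows
\[
\ell(X)=n-\max\{\,|Y|:\ Y\text{ a subsequence of }X\text{ representing the identity in }\FF\,\}.
\]
Therefore $\ell(X)\le u$ forces $X$ to contain an identity-subsequence of length $n-u$, and a union bound over the $\binom{n}{u}$ choices of deleted positions, the $(2k)^{u}$ values of the deleted letters, and the identity-subsequence gives
\[
\#\{X:\ell(X)\le u\}\ \le\ \binom{n}{u}(2k)^{u}\,I(n-u),
\]
where $I(m)$ is the number of length-$m$ words equal to the identity in $\FF$.

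The main obstacle is controlling $I(m)$, and this is where the free-group viewpoint is essential: $I(m)$ counts closed walks of length $m$ based at the root of the Cayley graph of $\FF$, i.e.\ the $2k$-regular tree, and by Kesten's theorem (nonamenability of $\FF$) this grows only like $(2\sqrt{2k-1})^{m}$ up to polynomial factors, with $2\sqrt{2k-1}<2k$ for every $k\ge2$. Writing $u=cn$ and $H(c)=-c\log c-(1-c)\log(1-c)$, the displayed bound has exponential rate $H(c)+c\log(2k)+(1-c)\log(2\sqrt{2k-1})$, which at $c=0$ equals $\log(2\sqrt{2k-1})<\log(2k)$; by continuity it stays below $\log(2k)$ for all small $c>0$. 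Hence there is $c_0>0$ with $\P(\ell(X)\le c_0n)\le e^{-\delta n}$, and then $L_k(n)\ge c_0 n\,\P(\ell(X)>c_0n)\ge c_0 n(1-e^{-\delta n})$, giving $\lambda_k\ge c_0>0$.

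The delicate point I want to flag is precisely \emph{why one must count distinct words rather than (word, matching) pairs}. A naive union bound over non-crossing matchings together with their consistent letterings has exponential rate $\tfrac12\log(8k)$ (the Catalan base $\sqrt{8k}$ from the first-return recursion), which equals $\log(2k)$ exactly at $k=2$ and is therefore useless in the RNA case, since a single word carries exponentially many matchings. Passing to the identity-word count collapses this multiplicity and replaces $\sqrt{8k}$ by the tree spectral radius $2\sqrt{2k-1}$, strictly below $2k$ uniformly in $k\ge2$; this is what lets the argument survive the borderline case $k=2$.
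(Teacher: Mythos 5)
Your proposal is correct and follows essentially the same route as the paper: subadditivity of $L_k$ plus Fekete's lemma for convergence, and for positivity the identical counting argument --- encode a word with small $\ell$ by its unmatched positions ($\binom{n}{u}$), unmatched letters ($(2k)^u$), and the matched subword, which represents the identity and is controlled by Kesten's spectral radius $\sqrt{2k-1}/k$, giving exponential rate strictly below $\log(2k)$ for small $u/n$. The only departures are cosmetic: your $\lambda_k\le 1-\tfrac{1}{2k}$ via adjacent pairs replaces the paper's computation of $\rho(4)$, and your closing remark on why counting trivial \emph{words} rather than (word, matching) pairs is essential at $k=2$ is a nice observation that the paper leaves implicit.
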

Thus, the average proportion of unpaired bases in an optimal secondary structure for a random RNA string converges to a \emph{positive} constant as the length of the RNA string approaches infinity. Equivalently, for a word $X$ in the free group $\F$ (or more generally in the free group $\FF$ for $k\geq 2$), the average ratio of the word length of $X$ in the (infinite) generating set consisting of conjugates of generators and their inverses to the word length of $X$ in the standard generators and their inverses converges to a positive constant. We remark that this result is also true, but essentially trivial, for the free group $\mathbb{Z}$ on $1$ generator (for the group $\mathbb{Z}$, the two generating sets, hence the corresponding word lengths, coincide).

We also show  that  $\ell(X^n)/n$ has exponential concentration in a window of length $1/\sqrt{n}$ around its expectation $\rho_k(n)$, and hence around $\lambda_{k}$.
\begin{proposition}\label{prop:concentration}
  $\P\l\{\given \ell(X^{(n)}) - n\rho_k(n) \given > t\sqrt{n} \r\} \le 2e^{-\frac{t^{2}}{8}}$ for any $t>0$.
\end{proposition}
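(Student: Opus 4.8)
The plan is to realize $\ell(X^{(n)})$ as a function of the $n$ independent coordinates $X_1,\dots,X_n$ that changes by a bounded amount under a single-coordinate alteration, and then to apply the Azuma--Hoeffding inequality to the associated Doob martingale. The entire argument is a soft one; there is no need for any of the combinatorial estimates used elsewhere, only the defining structure of non-crossing matchings.

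First I would establish the bounded-difference estimate: if $x$ and $x'$ are words of length $n$ over the $2k$-letter alphabet that differ in exactly one position $i$, then $\l|\ell(x)-\ell(x')\r|\le 2$. To see this, fix an optimal non-crossing matching $P$ for $x$, so that exactly $\ell(x)$ letters of $x$ are left unmatched. By property (a) the index $i$ lies in at most one pair of $P$. Delete that pair if it exists; the remaining set of pairs is still non-crossing (deleting pairs can never create a crossing) and still satisfies condition (c) for the word $x'$, since no surviving pair involves the altered position $i$. This gives a valid non-crossing matching for $x'$ that leaves at most two more letters unmatched than $P$ did, whence $\ell(x')\le\ell(x)+2$. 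By symmetry $\l|\ell(x)-\ell(x')\r|\le 2$.

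Next I set up the Doob martingale $M_i=\E\l[\ell(X^{(n)})\mid X_1,\dots,X_i\r]$, so that $M_0=\E\l[\ell(X^{(n)})\r]=n\rho_k(n)$ and $M_n=\ell(X^{(n)})$. Since the letters $X_1,\dots,X_n$ are i.i.d.\ and $\ell$ has the bounded-difference property with constant $2$ in every coordinate, the standard conditioning argument shows that the martingale increments satisfy $\l|M_i-M_{i-1}\r|\le 2$ almost surely. Azuma--Hoeffding then gives, for any $s>0$, the bound $\P\l\{\given M_n-M_0\given\ge s\r\}\le 2\exp\l(-\frac{s^2}{2\sum_{i=1}^n 2^2}\r)=2\exp\l(-\frac{s^2}{8n}\r)$. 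Taking $s=t\sqrt{n}$ yields $\P\l\{\given \ell(X^{(n)})-n\rho_k(n)\given> t\sqrt{n}\r\}\le 2e^{-t^2/8}$, as claimed; combining with Theorem~\ref{main}, $\rho_k(n)\to\lambda_k$, gives concentration around $\lambda_k$ as well.

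The only step with any content is the bounded-difference estimate, and even there the sole point to verify is that excising the unique pair through the modified index preserves the non-crossing property and condition (c) for the new word; both are immediate from the definitions. I therefore expect no genuine obstacle, and the $1/\sqrt{n}$ scale of the window is exactly what a bounded-difference (order $1$) functional of $n$ independent inputs produces.
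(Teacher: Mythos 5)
Your proof is correct and follows essentially the same route as the paper: establish the bounded-difference property $\l|\ell(x)-\ell(x')\r|\le 2$ by deleting the (at most one) matched pair through the altered coordinate, then apply the Azuma--Hoeffding inequality to the Doob martingale $M_i=\E\l[\ell(X^{(n)})\mid X_1,\dots,X_i\r]$ with increments bounded by $2$, yielding the constant $2e^{-t^2/8}$ at scale $t\sqrt{n}$. The ``standard conditioning argument'' you invoke to pass from the bounded-difference property to $\l|M_i-M_{i-1}\r|\le 2$ is exactly the resampling step the paper spells out (comparing $X$ with a copy $Y$ in which $X_j$ is replaced by an independent $X_j'$), so there is no substantive difference between the two arguments.
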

An immediate corollary is that the standard deviation of $\ell(X^n)$ is $O(\sqrt{n})$.

As for proofs, the existence of the limit $\lambda_k$ and the exponential concentration are proved using sub-additivity and Hoeffding's inequality respectively, which are standard methods in combinatorial optimization problems. Showing that $\lambda_k$ is strictly positive, and getting  bounds for its value require more involved arguments. It would be  interesting to find the exact value of $\lambda_k$, particularly $\lambda_2$. We are only able to get  bounds.

For $k=2$, we prove the explicit bounds $0.034< \lambda_2< 0.231$. The proof of Theorem~\ref{main} given in Section~\ref{S:Positive} gives the lower bound of $0.03$, which is then refined to get the slightly better lower bound of $0.034$. Elementary arguments in Section~\ref*{S:upperbound} give an upper bound of $0.289$ which is improved to $\frac{3}{13}=0.2307\ldots$  in Section~\ref{Sec:greedy}. This is achieved by analysing a specific algorithm for producing a non-crossing matching described below.

\subsection*{The one-sided greedy algorithm}   Scan the letters $X_{1},X_{2},\ldots$  in that order and when the turn of $X_{t}$ comes (starting from $t=1$), match it to  $X_{s}$ with the largest value of $s<t$, if possible (i.e., $X_{s}=\overline{X}_{t}$, and there is no $u\in (s,t)$ such that $X_{u}=\overline{X}_t$, and the non-crossing condition is maintained).

For example, if $k=2$ and the word is $\alpha \beta\alpha\beta\bar{\alpha}\alpha\bar{\beta}\beta$, then the matching is $3\leftrightarrow 5$, $2\leftrightarrow 7$ (here $3,5,2,7$ represent the indices in the word, of course).

\begin{proposition}\label{prop:onesidedgreedy} In the one-sided greedy algorithm, the proportion of unmatched letters converges to
  \begin{align}\label{eq:boundforlambdakfromgreedy}
    \tilde{\lambda}_k= 1-\frac{\sum_{r=1}^kr2^r\binom{k}{r}\prod_{j=1}^r\frac{j(j+1)}{j(2k-j)-1}}{k\sum_{r=0}^k\binom{k}{r}2^r\prod_{j=1}^r\frac{j(j+1)}{j(2k-j)-1}}.
  \end{align}
  Therefore $\lambda_k\le \tilde{\lambda}_k$.
\end{proposition}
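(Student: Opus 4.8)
The plan is to recognise the one-sided greedy procedure as a last-in, first-out stack process and then study the stack in stationarity. I would maintain a stack of the currently unmatched, ``accessible'' letters, listed from bottom to top. When $X_{t}$ is scanned, look down from the top for the first entry equal to $\bar{X}_{t}$: if it sits at height $j$, match it to $X_{t}$ and \emph{pop} that entry together with everything above it (the entries above become permanently unmatched, i.e.\ trapped, since any future partner would cross the new arc $(s,t)$); if no such entry exists, push $X_{t}$. Matching the topmost compatible entry is exactly ``largest $s$ with the non-crossing condition maintained'', because no completed arc can straddle a stack entry: had an arc $(a,b)$ with $a<s<b$ been completed, step $b$ would have popped everything above $a$, removing $s$. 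Hence every stack entry is a legal partner. (On the sample word $\alpha\beta\alpha\beta\bar\alpha\alpha\bar\beta\beta$ this reproduces $3\leftrightarrow5$, $2\leftrightarrow7$.) Each of the $n$ steps is a push or a match; if $M$ denotes the number of matches then exactly $2M$ letters are matched, so the proportion of unmatched letters equals $1-2M/n$, and it remains to find the asymptotic rate of $M$.

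Next I would reduce the bookkeeping to the number of distinct \emph{types}. Because a letter is pushed only when its inverse is absent, the stack never holds both $\alpha_{i}$ and $\bar\alpha_{i}$; hence the number of distinct letters on the stack equals the number of distinct types present, call it $D$, and $D\le k$. A fresh uniform letter matches iff its inverse is on the stack, i.e.\ with probability $D/2k$. Forgetting signs turns the stack into a Markov chain on words over $\{1,\dots,k\}$: with probability $(k-D)/k$ a new type is appended; each present type is, with probability $1/2k$, appended again, and, with probability $1/2k$, matched, truncating the word below the topmost occurrence of that type. Granting positive recurrence of this chain (a Lyapunov argument on the stack height, whose matches remove $\Theta(\mathrm{height}/D)$ letters and hence create negative drift when the stack is tall), the ergodic theorem gives $M/n\to \E_{\pi}[D]/2k$, so that $\tilde\lambda_{k}=1-\E_{\pi}[D]/k$. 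Since the greedy matching is a legal non-crossing matching, this simultaneously yields $\lambda_{k}\le\tilde\lambda_{k}$.

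The heart of the proof is the evaluation of $\E_{\pi}[D]$. The key structural fact is a monotonicity: \emph{a match always truncates the stack to a prefix of the list of distinct types ordered by first appearance from the bottom}, because the survivors of a match are precisely the types that first appear below the cut, and ``first appearance'' positions are totally ordered. Writing the stack word as $\sigma_{1}B_{1}\sigma_{2}B_{2}\cdots\sigma_{r}B_{r}$, where $\sigma_{i}$ is the introduction of the $i$-th type and each block $B_{i}$ consists of already-seen types, I would look for a product-form stationary measure with geometric blocks, $\pi(w)\propto\prod_{i=1}^{r}a_{i}\,b_{i}^{|B_{i}|}$, and verify the global balance equations; the contribution of the match in-flows is a geometric sum over the discarded suffix that closes in this ansatz. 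For the marginal only the combination
\[
\frac{a_{i}}{1-i\,b_{i}}=\frac{i+1}{\,i(2k-i)-1\,}
\]
is needed. Summing the measure over the $r!\binom{k}{r}2^{r}$ choices of the ordered, signed types then gives
\[
\P\{D=r\}\ \propto\ \binom{k}{r}2^{r}\prod_{j=1}^{r}\frac{j(j+1)}{\,j(2k-j)-1\,},
\]
and feeding $\E_{\pi}[D]=\sum_{r}r\,\P\{D=r\}$ into $\tilde\lambda_{k}=1-\E_{\pi}[D]/k$ reproduces~\eqref{eq:boundforlambdakfromgreedy}.

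The main obstacle is the verification of the product-form stationary measure, in particular matching the in-flow from matches (which can shorten the word by an arbitrary amount) against the one-step out-flow; this is where the geometric block structure and the monotonicity above must be combined to make the balance equations telescope and to pin down $a_{i}$ and $b_{i}$. I would stress that one cannot shortcut this by treating $D$ itself as a birth--death chain: $D$ is \emph{not} Markov, since a single match can decrease $D$ by more than one. It is only after solving the full word-valued chain that the marginal of $D$ turns out to coincide with the stationary law of a birth--death chain with birth probabilities $(k-r)/k$ and death probabilities $\bigl(s(2k-s)-1\bigr)/\bigl(2k(s+1)\bigr)$, which is what produces the clean product in~\eqref{eq:boundforlambdakfromgreedy}.
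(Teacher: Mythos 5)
Your plan coincides, step for step, with the paper's own proof in Section~\ref{Sec:greedy}: your stack of accessible letters is exactly the paper's Markov chain $w[t]$; your block decomposition $\sigma_1B_1\cdots\sigma_rB_r$ is the paper's statistic $a_i(w)$ (with $a_i=1+|B_i|$); the match probability $D/2k$, the ergodic limit, and the assembly $\tilde\lambda_k=1-\E_\pi[D]/k$ are the paper's computation; and your asserted combination $\frac{a_i}{1-i\,b_i}=\frac{i+1}{i(2k-i)-1}$ agrees with the paper's stationary weights $\tau_j=\frac{j+1}{j(2k+1)-1}$, for which $\frac{\tau_j}{1-j\tau_j}=\frac{j+1}{j(2k-j)-1}$. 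Your structural observations (the stack never carries a letter and its inverse simultaneously, matches truncate to a prefix of the block structure, $D$ itself is not Markov, and greedy being a legal matching gives $\lambda_k\le\tilde\lambda_k$) are all correct.

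The genuine gap is that the stationary distribution is asserted, not proved, and that is where essentially all the work in this proposition lies. You write the product-form ansatz and state the values of $a_i/(1-i\,b_i)$, but the verification of the global balance equations is exactly the step you defer as the ``main obstacle.'' Concretely, the in-flow into a word $w$ with $r$ distinct symbols comes from (i) $w$ with its last letter removed (a push), and (ii) infinitely many longer words of the form $w*x*u$ or $w*t_1*u$, where $x$ is a type already present, $t_1$ is a fresh type, and $u$ is an arbitrary admissible suffix avoiding the matched symbol, each reaching $w$ in one match. Enumerating these families (tracking how many new types the suffix introduces and how many choices each of its letters has), summing the geometric series, and showing the resulting equations telescope in $r$ --- which is what forces $\tau_r=\frac{r+1}{r(2k+1)-1}$ and, at the boundary, $\tau_k=1/(2k-1)$ --- is the entire content of the paper's Proposition~\ref{prop:statdist}, roughly a page of computation. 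Until that is carried out, you have a conjecture with the correct answer rather than a proof. One simplification you could make: the separate Lyapunov argument for positive recurrence is unnecessary, since exhibiting and verifying a summable stationary measure on the unique irreducible class $\Omega_0$ already implies positive recurrence and the convergence $M/n\to\E_\pi[D]/2k$ by the general theory of countable-state chains, which is how the paper proceeds.
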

The numerical values of upper bound for the first few $k$ are

\begin{tabular}{c|c|c|c|c}
  $k$                 & $2$                        & $3$                   & $4$                            & $5$                             \\ \hline
  $\tilde{\lambda}_k$ & $\frac{3}{13}=0.231\ldots$ & $\frac{33}{100}=0.33$ & $\frac{297}{455}=0.393\ldots $ & $\frac{3126}{7115}=0.439\ldots$
\end{tabular}

\medskip
Proposition~\ref{prop:onesidedgreedy} is proved by analysing an associated  Markov chain on the space of words. This Markov chain is described in Section~\ref{Sec:greedy}, where we also find its stationary distribution explicitly. It may be of  independent interest, as there are  not many examples of chains that are neither reversible nor have a doubly stochastic transition matrix for which we can solve for the stationary distribution exactly.

There is some slack in our proofs, so our bounds can be sharpened. However our goal here is to give a simple and transparent proof.  In fact certain enumerative algorithms suggest that $\lambda_2<0.11$ but we are unable to analyse these algorithms rigorously.

\subsection*{Dependence of $\lambda_k$ on $k$} One may also ask about the behaviour of $\lambda_k$ as a function of $k$. We claim that $\lambda_k\le \lambda_{k+1}$. This is easiest seen by coupling. Consider a random word $X^n$ using symbols $\alpha_i, \bar{\alpha}_i$, $1\le i\le k+1$. Let $X_{(j)}$ denote the word got by deleting all occurrences of $\alpha_j,\bar{\alpha}_j$ in $X^{(n)}$, and let $N_j$ be the length of $X_{(j)}$. Let $\ell_{(j)}$ denote the number of unmatched letters when  the optimal matching on $X^n$ is restricted to $X_{(j)}$. Then $\ell_{(1)}+\ldots +\ell_{(k+1)}=k\ell(X^n)$ and hence taking expectations and using symmetry, 
\begin{align}\label{eq:inequalityofLkLkplus1}
(k+1)\E[L_k(N_{1})]\le k L_{k+1}(n).
\end{align}
The expectation on the left is over the randomness in $N_{1}$ which has Binomial distribution with parameters $(n,k/(k+1))$. By Chebyshev's inequality, $\P\{n_-\le N_1\le n_+\}\ge 1-O(n^{-\frac12})$, where $n_{\pm}=\frac{kn}{k+1}\pm n^{\frac34}$.  As $n\mapsto L_k(n)$ is  obviously  increasing in $n$, 
\[
\E[L_k(N_1)]\ge (1-O(n^{-\frac12}))L_k(n_-).
\]
Combine this with \eqref{eq:inequalityofLkLkplus1}, divide by $n$, and let $n\to \infty$ to get  $\lambda_k\le \lambda_{k+1}$.

Further, we show in Proposition~\ref{prop:k-to-1} that $\lambda_k\to 1$ as $k\to \infty$.

\begin{remark}
  As a consequence of the convergence of the fraction unmatched to a positive constant and the concentration result, it follows that there is some \emph{scale} $N$ so that, for a generic RNA strand, optimal structures on pieces of length $N$ can be concatenated to give a near-optimal structure on the whole strand. As bonds at long distances are less likely to form, it follows that RNA folding can be localized to this scale, which makes foldings easier to analyse.
\end{remark}

\subsection*{Outline of the paper}  In Section~\ref{sec:preliminaries} we show that the different ways of defining the length $\ell$ outlined above give the same function. In Section~\ref{S:Positive} we prove Theorem~\ref{main} and the above-stated lower bounds for $\lambda_2$. In Section~\ref{S:upperbound} we present an elementary argument to obtain the upper bound of $0.289$ for $\lambda_2$. In Section~\ref{sec:concentration}, we prove Proposition~\ref{prop:concentration}. In Section~\ref{Sec:greedy} we introduce the Markov chain associated to the one-sided greedy algorithm, and  explicitly analyse it prove Proposition~\ref{prop:onesidedgreedy}. In particular, this leads to the improved upper bound for $\lambda_2$.

\section{Preliminaries}\label{sec:preliminaries}

For the convenience of the reader, we define length functions on groups and show that three definitions of the length $\ell$ on $\FF$ given above give the same function. The results in this section are elementary.

\begin{definition}
  Let $G = (G, \cdot, e, (\cdot)^{-1})$ be a group (written multiplicatively,
  with identity element $e$).  A \emph{length function} on $G$ is a
  map $l : G \to [0,+\infty)$ that obeys the properties
  \begin{itemize}
    \item $l(e) = 0$,
    \item $l(x)>0$, for all $x \in G\setminus \{e\}$,
    \item $l(x^{-1}) = l(x)$, for all $x,y\in G$.
    \item $l (x y) \leq l(x) + l(y)$, for all $x,y\in G$.
  \end{itemize}
\end{definition}

\begin{definition}
  We say that a length function $l$ is \emph{conjugacy-invariant} if $l(xyx^{-1}) = l(y)$ for all $x,y\in G$.
\end{definition}

We shall see here that three definitions of a length $\ell:\FF \to [0, \infty)$ coincide. We also give more details of these definitions.

\subsection{Maximal length}
Consider the set $\mathcal{L}$ consisting of conjugacy-invariant length functions $l: \FF\to [0, +\infty)$ satisfying $l(\alpha_i)\leq 1$ for all $1\leq i\leq k$. We have a partial order on length functions on $\FF$ given by $l_1 \leq l_2$ if and only if $l_1(g)\leq l_2(g)$ for all $g\in\FF$. For this order, it is well known that there is a (necessarily unique, by properties of posets) maximal element. Namely, define
$$\ell_{max}(g) = \sup\{l(g): l\in\mathcal{L}\}.$$

Note that the set $\{l(g): l\in\mathcal{L}\}$ is bounded by the word length of $g$, so has a supremum.
It is easy to see that $\ell_{max}$ is a conjugacy-invariant length function, and that $\ell(\alpha_i)\leq 1$ for all $1\leq i\leq k$. Thus $\ell_{max}\in \mathcal{L}$. Further, by construction, if $l\in\mathcal{L}$, then $l\leq \ell_{max}$. Thus $\ell_{max}$ is the maximum of the set $\mathcal{L}$.

\subsection{Word length in conjugates of generators}

Let $\ell_{CW}: \FF \to [0, +\infty)$ be the function given by the  word length in the generating set consisting of all conjugates of the generators $\alpha_i$, $1\leq i \leq k$. Thus, for $g\in\FF$, $\ell_{CW}(g)$ is the smallest value $r\geq 0$ so that $g$ can be expressed as
$$g = \prod_{j=1}^r \beta_j\alpha_{i_j}^{\epsilon_j}\beta_j^{-1},$$
where $\beta_j\in \FF$ and $\epsilon_j=\pm 1$, for $1 \leq j\leq r$.

\begin{proposition}
  We have $\ell_{CW} = \ell_{max}$.
\end{proposition}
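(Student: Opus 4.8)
The plan is to prove the two inequalities $\ell_{CW} \le \ell_{max}$ and $\ell_{max} \le \ell_{CW}$ separately. The first will follow by showing that $\ell_{CW}$ itself belongs to the family $\mathcal{L}$, so that it is dominated by the maximal element $\ell_{max}$. The second will follow from a direct pointwise estimate: every $l \in \mathcal{L}$ is bounded above by $\ell_{CW}$, so their supremum $\ell_{max}$ is too.

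First I would check that $\ell_{CW} \in \mathcal{L}$ by verifying the axioms on a minimal-length representation $g = \prod_{j=1}^r \beta_j \alpha_{i_j}^{\epsilon_j}\beta_j^{-1}$. Finiteness of $\ell_{CW}$ holds because each generator satisfies $\alpha_i = e\,\alpha_i\, e^{-1}$, so the conjugates of generators already generate $\FF$. Subadditivity is immediate: concatenating a representation of $g$ with one of $h$ yields a representation of $gh$ using $\ell_{CW}(g)+\ell_{CW}(h)$ factors. Symmetry $\ell_{CW}(g^{-1})=\ell_{CW}(g)$ follows by reversing the order of the factors and negating each $\epsilon_j$, which leaves the number of factors unchanged. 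For positivity, the empty product ($r=0$) equals $e$, so any $g\neq e$ requires $r\ge 1$ and hence $\ell_{CW}(g)\ge 1$. Conjugacy-invariance is the key structural feature: replacing each $\beta_j$ by $h\beta_j$ turns the representation of $g$ into one of $hgh^{-1}$ with the same number of factors, giving $\ell_{CW}(hgh^{-1})\le \ell_{CW}(g)$, and the reverse by conjugating back. Finally $\ell_{CW}(\alpha_i)\le 1$ since $\alpha_i=e\,\alpha_i\, e^{-1}$. This places $\ell_{CW}$ in $\mathcal{L}$, so $\ell_{CW}\le \ell_{max}$.

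Next I would establish $\ell_{max}\le \ell_{CW}$. Fix $g$ and a representation realizing $r=\ell_{CW}(g)$. For any $l\in\mathcal{L}$, subadditivity gives $l(g)\le \sum_{j=1}^r l(\beta_j\alpha_{i_j}^{\epsilon_j}\beta_j^{-1})$; conjugacy-invariance simplifies each summand to $l(\alpha_{i_j}^{\epsilon_j})$; and the symmetry axiom together with the normalization yields $l(\alpha_{i_j}^{\epsilon_j})=l(\alpha_{i_j})\le 1$. Hence $l(g)\le r=\ell_{CW}(g)$ for every $l\in\mathcal{L}$, and taking the supremum over $l$ gives $\ell_{max}(g)\le \ell_{CW}(g)$. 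Combining the two inequalities yields $\ell_{CW}=\ell_{max}$.

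The argument is essentially a routine unwinding of the definitions, so I do not expect a serious obstacle; the one point requiring a moment's care is the positivity axiom for $\ell_{CW}$, which must be read off from the convention that the empty product is the identity, together with the implicit finiteness claim that the conjugates of the generators do generate $\FF$ (guaranteed because the generators themselves are among these conjugates).
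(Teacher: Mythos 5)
Your proposal is correct and follows essentially the same route as the paper: show $\ell_{CW}\in\mathcal{L}$ (hence $\ell_{CW}\le\ell_{max}$), then use the triangle inequality, conjugacy-invariance, symmetry, and the normalization $l(\alpha_i)\le 1$ on a minimal representation $g=\prod_{j=1}^r\beta_j\alpha_{i_j}^{\epsilon_j}\beta_j^{-1}$ to get $l(g)\le\ell_{CW}(g)$ for every $l\in\mathcal{L}$. The only difference is that you spell out the verification of the length-function axioms for $\ell_{CW}$, which the paper compresses into the remark that word length in a conjugacy-invariant generating set is a conjugacy-invariant length function.
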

\begin{proof}
  We see that $\ell_{CW}\in\mathcal{L}$. This is because the word length in a conjugacy-invariant set is a conjugacy-invariant length function, and $\ell_{CW}(\alpha_i) = 1$ for $1\leq i\leq k$.

  Further, we see that $\ell_{CW}$ is maximal. Namely, let $l\in\mathcal{L}$, $g\in G$ and let $r = l_{CW}(g)$. Then we can express $g$ as $g = \prod_{j=1}^r \beta_j\alpha_{i_j}^{\epsilon_j}\beta_j^{-1}$. By the triangle inequality, conjugacy-invariance, symmetry, and using $l(\alpha_i)\leq 1$ for $i\leq i\leq k$,
  $$l(g)\leq\sum_{j=1}^r l(\beta_j\alpha_{i_j}^{\epsilon_i}\beta_j^{-1})\leq\sum_{j=1}^r l(\alpha_{i_j}^{\epsilon_i})\leq \sum_{j=1}^r 1 = r = \ell_{CW}(g),$$
  as required

  As $\ell_{CW}\in \mathcal{L}$ is maximal, $\ell_{CW}=\ell_{max}$.
\end{proof}

\subsection{Length from non-crossing matchings}

Let $X^{(n)}=(X_{1},\ldots ,X_{n})$ be a word in the alphabet with $2k$ letters $\alpha_1$, $\bar{\alpha}_1$, \dots, $\alpha_k$, $\bar{\alpha}_k$. Let $NC$ stand for incomplete non-crossing matchings of $[n]= \{1, 2,\dots,n\}$. Let $NC_{k}(X)$ be the subset of $M\in NC$ such that for each matched pair $(i,j)\in M$ we have $\{X_{i},X_{j}\}=\{\alp_{\ell},\bar{\alp}_{\ell}\}$ for some $\ell \le k$.

let $\ell_{NC}(X)$ be the minimum number of unmatched pairs in all non-crossing matchings so that letters are paired with their inverses. We sketch the proofs that this is well-defined on $\FF$, a conjugacy invariant length function and that $\ell_{NC} = \ell_{max}$. For more details, see~\cite{Ga2} (which however has different terminology, and considers proofs for the case of two generators, though the proofs work just the same for general $k$).

\begin{lemma}\label{welldefined}
  Suppose $X_1$ and $X_2$ represent the same element in the group $\FF$, then $\ell_{NC}(X_1)= \ell_{NC}(X_2)$.
\end{lemma}
\begin{proof}
  It suffices to consider the case where $X_1$ and $X_2$ are related by a single cancellation. Without loss of generality, assume that there exist words $W_1$ and $W_2$ and an index $1\leq j\leq k$ such that $X_1= W_1W_2$ and $X_2=W_1\alpha_j\alpha_j^{-1}W_2$. Let $\mu_p$ be the length of $W_p$ for $p = 1,2$. Note that the cancelling pair corresponds to the pair $(\mu_1+1, \mu_1+ 2)$ of indices.

  We show that $\ell_{NC}(X_1)= \ell_{NC}(X_2)$. First, fix a non-crossing matching $M_1$ of $X_1$ with $\ell_{NC}(X_1)$ unmatched letters and with letters paired with their inverses. Let $\sigma:\N\to\N$ be defined by
  $$\sigma(m) = \begin{cases}
      m     & \textrm{if $m\leq \mu_1$}, \\
      m + 2 & \textrm{if $m > \mu_1$}
    \end{cases}$$
  Then $M_2 := \sigma(M_1)\cup\{(\mu_1+1, \mu_1 + 2)\}\in NC_k(X_2)$ and has $\ell_{NC}(X_1)$ unmatched letters (i.e., the same as $M_1$). Hence $\ell_{NC}(X_2)\leq \ell_{NC}(X_1)$.

  Conversely, fix a non-crossing matching $M_2\in NC_k(X_2)$ with $\ell_{NC}(X_2)$ unmatched letters. Suppose at most one of $\mu + 1$ and $\mu + 2$ is matched in $M'$, and $(i, j)\in M$ is the corresponding pair with $j\in \{\mu +1, \mu+ 2\}$. Then $M_1 :=M_2\setminus\{(i, j)\}\in NC_k(X_1)$ and $M_1$ has at most $\ell_{NC}(X_2)$ unmatched letters.

  Next, if $(\mu + 1, \mu + 2)\in M_2$, then $M_1:= M_2\setminus\{(\mu + 1, \mu + 2)\}\in NC_k(X_1)$ and $M_1$ has  $\ell_{NC}(X_2)$ unmatched letters.

  Finally, if for some indices $i$ and $j$ we have $(i, \mu + 1)\in M_2$ and $(j, \mu + 2)\in M_2$ (after possibly flipping some pairs), we can see that $$M_1:= M_2\cup\{(\mu + 1, \mu + 2)\}\setminus{\{(i, \mu + 1), (j, \mu + 2)\}}\in NC_k(X_1)$$ and $M_1$ has $\ell_{NC}(X_2)$ unmatched letters.

  In all cases, we conclude that $\ell_{NC}(X_1)\leq \ell_{NC}(X_2)$.

\end{proof}

It follows that $\ell_{NC}$ induces a well-defined function on $\FF$, which we also denote as $\ell_{NC}$. It is easy to see that it is a length function. The proof of the following is very similar to that of Lemma~\ref{welldefined}.

\begin{lemma}\label{conjinv}
  Suppose $g, h\in\FF$, then $\ell_{NC}(hgh^{-1})= \ell_{NC}(g)$.
\end{lemma}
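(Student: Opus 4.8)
The plan is to mimic the structure of the proof of Lemma~\ref{welldefined} almost verbatim, replacing the elementary cancellation move by a conjugation move. The key observation is that $\ell_{NC}$ is already known to be well-defined on $\FF$ (by Lemma~\ref{welldefined}), so it suffices to exhibit, for a given $g,h\in\FF$, \emph{some} word representing $hgh^{-1}$ and compare its optimal non-crossing matching to that of a word representing $g$. By the triangle inequality for cancellation already established, I may choose the most convenient representatives. So first I would reduce to the case $h=\alpha_j^{\pm 1}$ a single generator: since $\ell_{NC}$ is conjugacy-invariant under $h$ if and only if it is invariant under each letter of a reduced word for $h$ (conjugation composes), and invariance under $\alpha_j$ and under $\alpha_j^{-1}$ are equivalent by symmetry, it is enough to show $\ell_{NC}(\alpha_j g \alpha_j^{-1}) = \ell_{NC}(g)$ for a single generator $\alpha_j$.

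With that reduction, fix a word $X=(X_1,\dots,X_n)$ representing $g$ and consider the word $Y=(\alpha_j, X_1,\dots,X_n,\bar\alpha_j)$ of length $n+2$, which represents $\alpha_j g\alpha_j^{-1}$. One direction is immediate: given an optimal matching $M$ on $X$ with $\ell_{NC}(X)$ unmatched letters, shift all indices up by one to obtain a matching on the interior of $Y$, and adjoin the pair $(1,n+2)$ matching the new outer letters $\alpha_j$ and $\bar\alpha_j$; this is legal because $X_1=\bar X_n^{\,}$ is not required and the new pair $(1,n+2)$ straddles everything, so the non-crossing condition is preserved. This gives $\ell_{NC}(\alpha_j g\alpha_j^{-1}) \le \ell_{NC}(g)$. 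For the reverse inequality I would take an optimal matching $M'$ on $Y$ and perform a case analysis on the fate of the two new outer positions $1$ and $n+2$, exactly paralleling the three cases in Lemma~\ref{welldefined}: either they are matched to each other (delete the pair), or at most one is matched to an interior letter (delete that pair and lose at most one matched letter, hence gain at most two unmatched letters—here I must be careful), or both are matched to distinct interior letters, in which case a rerouting swap is needed.

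The reverse direction is where the main obstacle lies, and it is genuinely more delicate than the cancellation case. When the outer $\alpha_j$ at position $1$ is matched to some interior $\bar\alpha_j$ at position $p$, and the outer $\bar\alpha_j$ at position $n+2$ is matched to some interior $\alpha_j$ at position $q$, deleting these two pairs would strand two interior letters and worsen the count by two, which is too much. The fix is a rerouting argument: because position $1$ is the leftmost and $n+2$ is the rightmost, the non-crossing condition forces $p$ and $q$ to be "outermost" in a suitable sense, and one replaces the pairs $(1,p)$ and $(q,n+2)$ by a single interior pair $(p,q)$ provided $p<q$ and $X_{p}=\bar X_q$. The letters at $p$ and $q$ are $\bar\alpha_j$ and $\alpha_j$ respectively, so they are indeed mutual inverses, and the non-crossing property of $M'$ guarantees no matched pair separates $p$ from $q$; hence $(p,q)$ may be added after removing the two outer pairs, yielding a valid matching on $X$ with the same number of unmatched letters. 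I would verify carefully that $p<q$: if instead $q<p$, then the pairs $(1,p)$ and $(q,n+2)$ cross (since $1<q<p<n+2$), contradicting that $M'$ is non-crossing. This rules out the bad configuration and closes the case analysis, giving $\ell_{NC}(g)\le \ell_{NC}(\alpha_j g\alpha_j^{-1})$ and hence equality. The only point demanding real care is this rerouting step and the ordering argument; the rest is bookkeeping identical in spirit to Lemma~\ref{welldefined}.
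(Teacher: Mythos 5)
Your proof is correct and takes essentially the approach the paper intends: the paper omits the argument, remarking only that it is ``very similar'' to that of Lemma~\ref{welldefined}, and your write-up is exactly that proof fleshed out. Your reduction to conjugation by a single generator mirrors the paper's reduction to a single cancellation, and your three-case analysis on the outer positions $1$ and $n+2$ --- including the rerouting swap replacing $(1,p)$ and $(q,n+2)$ by $(p,q)$, with $p<q$ forced by non-crossing --- is the direct analogue of the three cases in the proof of Lemma~\ref{welldefined}.
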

\qed

It is easy to see that $\ell_{NC}(\alpha_i) = 1$ for all $1\leq i\leq k$, and that $\ell_{NC}$ is symmetric. Thus $\ell_{NC}\in\mathcal{L}$. Hence, to show that $\ell_{NC} = \ell_{max}$ it suffices to prove maximality, which we prove next.

\begin{lemma}\label{maximal}
  Suppose $l\in\mathcal{L}$ and $g\in \FF$. Then $l(g)\leq\ell_{NC}(g)$.
\end{lemma}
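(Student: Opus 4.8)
The plan is to prove the stronger statement that for \emph{every} word $X$ representing $g$ and every non-crossing matching $M\in NC_k(X)$ one has $l([X])\le u(M)$, where $[X]\in\FF$ denotes the group element represented by $X$ and $u(M)$ is the number of letters of $X$ left unmatched by $M$. Minimizing over $M$ then yields $l(g)\le \ell_{NC}(g)$ immediately. I would fix a word $X$ representing $g$ and induct on its length $n$.

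The base cases $n=0$ and $n=1$ are immediate: the empty word gives $l(e)=0$, and a single letter is necessarily unmatched with $l(\alpha_i^{\pm 1})\le 1$ by the defining property of $\mathcal{L}$ together with symmetry. For the inductive step I would split on the status of the first letter $X_1$. If $X_1$ is unmatched, write $X=X_1 X'$; the restriction of $M$ to $\{2,\dots,n\}$ is a matching $M'$ of $X'$ with $u(M')=u(M)-1$, and the triangle inequality gives $l([X])\le l(X_1)+l([X'])\le 1+l([X'])$, so the inductive hypothesis closes this case.

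If instead $X_1$ is matched to some $X_j$ with $X_1=\bar{X}_j$, the non-crossing condition forces every other pair of $M$ to lie entirely inside $\{2,\dots,j-1\}$ or entirely inside $\{j+1,\dots,n\}$. Writing $A=X_2\cdots X_{j-1}$ and $B=X_{j+1}\cdots X_n$, the matching then decomposes as $M=M_A\cup M_B\cup\{(1,j)\}$ with $u(M)=u(M_A)+u(M_B)$. Since $X_j=X_1^{-1}$ in $\FF$, we have $[X]=(X_1[A]X_1^{-1})\,[B]$, so conjugacy-invariance (Lemma~\ref{conjinv}) and the triangle inequality give $l([X])\le l([A])+l([B])$, and applying the inductive hypothesis to $A$ and $B$ completes the step.

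The one point requiring care—and the only genuine obstacle—is this clean decomposition in the matched case: I must verify from the non-crossing axiom that no pair of $M$ straddles the pair $(1,j)$, so that $M$ restricts to honest matchings of the two sub-words $A$ and $B$ with no leftover interaction between them. Everything else is a routine application of the triangle inequality, symmetry, and conjugacy-invariance.
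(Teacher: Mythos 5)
Your proof is correct, and while it shares the paper's skeleton---strong induction on word length with a case split on whether the first index is matched---it differs in two genuine ways, both gains in economy. First, you strengthen the inductive statement to bound $l([X])$ by the number of unmatched letters of an \emph{arbitrary} matching $M\in NC_k(X)$, rather than inducting with an optimal matching as the paper does; this frees you from the optimality bookkeeping the paper needs (e.g.\ its claim that $\ell_{NC}(X)=\ell_{NC}(\widehat{X})+1$ when index $1$ is unmatched, and the implicit inequality $\ell_{NC}(X_1)+\ell_{NC}(X_2)\le \ell_{NC}(X)$ in its split case). Second, you merge into one case what the paper treats as two: for a pair $(1,j)$ the paper splits off the prefix of length $j$ when $j<n$ and handles $j=n$ separately via conjugacy invariance of \emph{both} $l$ and $\ell_{NC}$, whereas your decomposition $[X]=(X_1[A]X_1^{-1})[B]$ handles all $j$ uniformly and needs only the conjugacy invariance of $l$. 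In particular, your argument never actually requires Lemma~\ref{conjinv}, and that is also your one slip: the parenthetical citation of Lemma~\ref{conjinv} is misplaced, since that lemma concerns $\ell_{NC}$, while the invariance you use at that step, $l(X_1[A]X_1^{-1})=l([A])$, is the defining property of $l\in\mathcal{L}$. The decomposition fact you flag as the one point needing care is indeed immediate from the non-crossing axiom: a pair $(a,b)$ with $2\le a<j<b$ would cross $(1,j)$, so every other pair lies wholly inside $\{2,\dots,j-1\}$ or wholly inside $\{j+1,\dots,n\}$.
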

\begin{proof}
  Let $X$ be a word representing $g$. We prove the lemma by (strong) induction on the length $n$ of $X$. The case when the length is zero is clear.
  Consider a non-crossing matching $M\in NC_k(X)$ with $\ell_{NC}(X)$ unmatched letters. First, suppose the index $1$ is unmatched in $M$, let $\widehat{X}$ be obtained from $X$ by deleting the first letter. Then $M\in NC_k(\widehat{X})$, so by induction hypothesis, $l(\widehat{X})\leq \ell_{NC}(\widehat{X})$. Further, as $M$ restricted to $\widehat{X}$ has one less unmatched letter than $M$, we conclude that $\ell_{NC}(X) = \ell_{NC}(\widehat{X}) + 1$. As the first letter of $X$ is a generator or the inverse of a generator, using the triangle inequality
  $$l(X)\leq 1 + l(\widehat{X})\leq 1 + \ell_{NC}(\widehat{X})=\ell_{NC}(X).$$

  Next, if the pair $(1, j)\in M$ with $j <n$, we split the word $X$ as $X = X_1 * X_2$ with $X_1$ of length $j$. Observe that the non-crossing condition implies that $M$ decomposes as $M_1\cup M_2$ with $M_1\in NC_k(X_1)$ and $M_2\in NC_k(X_2)$. Again, we use the induction hypothesis and the triangle inequality to conclude that $l(X)\leq \ell_{NC}(X)$.

  Finally, if $(1, n)\in M$, let $\widehat{X}$ be obtained from $X$ by deleting the first and last letter. By conjugacy invariance of $l$ and $\ell_{NC}$, $l(X) = l(\widehat{X})$ and $\ell_{NC}(X)= \ell_{NC}(\widehat{X})$. Applying the induction hypothesis to $X$ gives the claim.
\end{proof}

Thus, we can conclude the following.

\begin{proposition}
  We have $\ell_{NC} = \ell_{max}$.
\end{proposition}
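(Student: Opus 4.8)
The plan is to sandwich $\ell_{NC}$ between two inequalities that together force equality with $\ell_{max}$. The key observation, already recorded in the discussion immediately preceding Lemma~\ref{maximal}, is that $\ell_{NC}$ is itself a member of the family $\mathcal{L}$: Lemmas~\ref{welldefined} and~\ref{conjinv} establish that it is a well-defined conjugacy-invariant length function on $\FF$, and one checks directly that $\ell_{NC}(\alpha_i)=1\le 1$ for each $i$ and that $\ell_{NC}$ is symmetric. Hence $\ell_{NC}\in\mathcal{L}$, and the whole proof reduces to comparing it with the maximum of $\mathcal{L}$.

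First I would deduce the inequality $\ell_{NC}\le\ell_{max}$. This is immediate from the construction of $\ell_{max}$: since $\ell_{max}(g)=\sup\{l(g):l\in\mathcal{L}\}$ and $\ell_{NC}\in\mathcal{L}$, the value $\ell_{NC}(g)$ is one of the quantities over which the supremum is taken, so $\ell_{NC}(g)\le\ell_{max}(g)$ for every $g\in\FF$.

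For the reverse inequality I would invoke Lemma~\ref{maximal}, which asserts that $l(g)\le\ell_{NC}(g)$ for every $l\in\mathcal{L}$ and every $g\in\FF$. Since $\ell_{max}\in\mathcal{L}$ (verified in the subsection on maximal length), applying the lemma with $l=\ell_{max}$ gives $\ell_{max}(g)\le\ell_{NC}(g)$ for all $g$. Combining the two inequalities yields $\ell_{NC}=\ell_{max}$, as desired.

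There is no genuine obstacle remaining at this stage, since all the combinatorial content has already been absorbed into the supporting lemmas: the well-definedness on $\FF$ in Lemma~\ref{welldefined}, conjugacy invariance in Lemma~\ref{conjinv}, and above all the maximality bound in Lemma~\ref{maximal}, whose strong induction on word length (with its case analysis according to whether the first index is unmatched, matched to an interior index, or matched to the last index) does the real work. The present proposition is merely the bookkeeping step that assembles these facts, exactly parallel to the way the earlier conclusion $\ell_{CW}=\ell_{max}$ was assembled from membership in $\mathcal{L}$ together with a maximality argument.
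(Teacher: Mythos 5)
Your proof is correct and follows essentially the same route as the paper: both establish $\ell_{NC}\in\mathcal{L}$ via Lemmas~\ref{welldefined} and~\ref{conjinv} plus the easy checks, and then use the maximality statement of Lemma~\ref{maximal} together with the definition of $\ell_{max}$ as the maximum of $\mathcal{L}$ to conclude equality. Your explicit two-inequality phrasing (applying Lemma~\ref{maximal} with $l=\ell_{max}$) is just a slightly more spelled-out version of the paper's one-line assembly.
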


\section{The proportion of unmatched indices}\label{S:Positive}
In this section, we prove Theorem~\ref{main} and get lower bounds on $\lambda_{2}$. At first, $k$ is fixed, hence we drop it in the subscripts of $L(n)$ and $\rho(n)$.

The first observation is  that $L(n)$ is sub-additive.

\begin{lemma}\label{lem:subadditivityofL}
  For $m,n>0$, $L(m+n)\leq L(m)+L(n)$.
\end{lemma}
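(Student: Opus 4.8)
The plan is to prove subadditivity by exhibiting, for any word $X^{(m+n)}$ of length $m+n$, a non-crossing matching whose number of unmatched letters is bounded by $L(m)+L(n)$, and then taking expectations. The key structural fact is that a random word of length $m+n$ splits canonically into an initial segment $X_1\cdots X_m$ and a final segment $X_{m+1}\cdots X_{m+n}$, and these two segments are themselves independent uniform random words of lengths $m$ and $n$.

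First I would argue combinatorially: given any non-crossing matching $M_1 \in NC_k(X_1\cdots X_m)$ on the first block and any non-crossing matching $M_2 \in NC_k(X_{m+1}\cdots X_{m+n})$ on the second block (with indices of $M_2$ shifted up by $m$), their union $M_1 \cup M_2$ is again a valid element of $NC_k(X^{(m+n)})$. The crucial point is that no pair of $M_1$ crosses any pair of $M_2$: every pair in $M_1$ has both endpoints in $\{1,\dots,m\}$ and every pair in $M_2$ has both endpoints in $\{m+1,\dots,m+n\}$, so the intervals are disjoint and the non-crossing condition (b) is automatic. Since the constraint $X_i=\bar{X}_j$ is inherited from each block, $M_1\cup M_2 \in NC_k$. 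The number of unmatched indices in $M_1\cup M_2$ is exactly the sum of the unmatched counts of $M_1$ and $M_2$.

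Consequently, choosing $M_1$ and $M_2$ to be \emph{optimal} for their respective blocks gives
\[
\ell(X^{(m+n)}) \le \ell(X_1\cdots X_m) + \ell(X_{m+1}\cdots X_{m+n}),
\]
since the optimal matching on the whole word can only do better than this particular concatenated matching. Taking expectations over the uniform distribution on words of length $m+n$, and using that the first $m$ letters form a uniform word of length $m$ while the last $n$ form an independent uniform word of length $n$, linearity of expectation yields $L(m+n) \le L(m) + L(n)$.

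I do not expect a serious obstacle here; the statement is a textbook instance of the subadditivity that underlies Fekete-type limit arguments in combinatorial optimization. The one point that deserves a careful word is the verification that concatenation preserves the non-crossing property, i.e.\ that a matched pair straddling the cut point is never created by the union --- this holds precisely because we only combine matchings internal to each block and introduce no new cross-block pairs. Everything else is linearity of expectation together with the observation that restricting the uniform measure on length-$(m+n)$ words to a coordinate block gives the uniform measure on that block.
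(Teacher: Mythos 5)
Your proof is correct and follows exactly the paper's argument: concatenate optimal matchings on the two blocks (the union is trivially non-crossing since the blocks occupy disjoint index intervals), conclude $\ell(X^{(m+n)})\le \ell(X_1\cdots X_m)+\ell(X_{m+1}\cdots X_{m+n})$, and take expectations using the independence and uniformity of the two segments. The extra care you take in verifying the non-crossing condition and the distributional claim is exactly what the paper compresses into ``it is easy to see.''
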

\begin{proof}
  A string $X^{(m+n)}$ of i.i.d. random variables of length $m+n$  is obtained by taking the concatenation $X_1^{(n)}*X_2^{(m)}$ of two strings $X_1^{(n)}$ and $X_2^{(m)}$ of i.i.d. random variables of lengths $n$ and $m$ respectively. As the union of elements  $M_1=NC_k(X_1)$ and $M_2=NC_k(X_2)$ gives a matching $M\in NC_k(X)$, it is easy to see that $\ell(X)\leq \ell(X_1)+\ell(X_2)$. By taking expectations the lemma follows.
\end{proof}

As a well known consequence of sub-additivity (Fekete's lemma), we obtain the following.

\begin{corollary}
  The sequence $\rho(n)=L(n)/n$ converges to  $\lambda_k:=\inf_{n} \rho (n)$.
\end{corollary}

As $0\leq \rho(n)\leq 1$, we get $0\leq \lambda_k\leq 1$. It is easy to get some upper bounds for $\lambda_k$ by computing $\rho(n)$ for small $n$ (as $\lambda_k$ is the infimum of $\rho(n)$). For instance, for $k = 2$ and $n= 4$, $\ell(X)$ takes values $0$, $2$ and $4$ with probabilities $28/256$, $168/256$ and $60/256$, respectively, hence $\lambda_2\leq \rho(4) = 9/16$. The harder thing is to get lower bounds. Our main result is that $\lambda_k$, which is the asymptotic proportion of unpaired bases, is positive.

\begin{lemma}\label{lem:positivityofc0}
  We have $\lambda_k>0$.
\end{lemma}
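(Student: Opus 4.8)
The plan is to show that the expected number of unmatched letters grows at least linearly, i.e. $L_k(n) \geq c_0 n$ for some $c_0 > 0$, which immediately gives $\lambda_k = \inf_n \rho(n) > 0$. The natural strategy is to find a \emph{local obstruction} to matching: a short pattern of letters that, whenever it appears as a contiguous block in $X^{(n)}$, forces at least one unmatched letter in \emph{any} non-crossing matching, in a way that can be localized so that disjoint occurrences contribute additively. First I would look for a fixed finite word $w$ (a ``bad block'') with the property that if $w$ occurs as a contiguous substring, then in any valid non-crossing matching at least one index inside that block must go unmatched, and moreover this unmatched index can be charged to the block so that non-overlapping occurrences of $w$ give disjoint contributions. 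Since $X^{(n)}$ is i.i.d.\ uniform, the expected number of disjoint occurrences of any fixed block of length $b$ in the first $n$ letters is of order $n$ (by partitioning $[n]$ into consecutive windows of length $b$ and noting each window equals $w$ with a fixed positive probability $(2k)^{-b}$), so summing these forced unmatched letters yields $L_k(n) \geq c_0 n$.

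The key structural input is why a block can force an unmatched letter. In a non-crossing matching, any matched pair $(i,j)$ with $X_i = \bar X_j$ has the crucial \emph{nesting} property: the letters strictly between $i$ and $j$ that are matched must be matched among themselves (by the non-crossing condition (b)), so the substring $X_{i+1}\cdots X_{j-1}$ must itself reduce to the empty word when one repeatedly cancels adjacent inverse pairs --- equivalently, for the pair to ``close off,'' everything nested inside it must cancel. This is exactly the observation that $\ell_{NC}$ equals the reduced/cyclically-reduced length in the free group. Consequently, if I choose $w$ so that no cyclic rotation of any subword of $w$ can fully cancel --- for instance a block in which some generator appears but its inverse never appears, or more robustly a block like $\alpha_1\alpha_1$ or $\alpha_1 \alpha_2 \bar\alpha_1 \bar\alpha_2$ whose reduced form is nontrivial and cannot be absorbed --- then at least one of its letters cannot be paired off within a region that keeps the non-crossing property. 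I would make this precise by relating $\ell_{NC}$ to the free-group norm established in Section~\ref{sec:preliminaries}: since $\ell_{NC} = \ell_{max}$, an unmatched-letter lower bound for a block follows from a lower bound on its conjugacy-invariant free-group length.

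The main obstacle, and the step requiring care, is the \emph{localization}: a matched pair can reach far outside a given block, so an index inside the block might be matched to a partner very far away, and I must ensure this does not destroy the ``one forced unmatched letter per disjoint block'' accounting. The honest way to handle this is to fix the window structure first. I would partition $[n]$ into consecutive windows of length $b$ and, for each window, ask only about the \emph{internal} matching: given any global non-crossing matching $M$, restrict $M$ to each window and observe that within a window the restriction is still a valid non-crossing matching of that window's letters, so the number of letters in the window that are unmatched \emph{internally} is at least $\ell_{NC}$ of the window's word. Summing $\ell_{NC}$ over windows bounds the total number of internally-unmatched letters from below, but globally-matched letters that cross window boundaries complicate this; the cleanest fix is to bound the total number of cross-boundary pairs (at most one deep nesting structure per window boundary can be active) and subtract it, or to use the subadditivity already established together with a single computation $L_k(b) > 0$ for a well-chosen small $b$. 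Indeed the latter is likely the slickest route: since $\lambda_k = \inf_n L_k(n)/n$, it suffices to exhibit one value of $n$ with $L_k(n) > 0$, because subadditivity forces $\lambda_k \le \rho(n)$ but positivity of $\lambda_k$ requires the reverse direction --- so the real content is a uniform-in-$n$ linear lower bound, for which the block-counting argument above, with the cross-boundary correction controlled, is the mechanism I expect to be decisive.
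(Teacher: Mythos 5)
Your central mechanism --- a finite ``bad block'' $w$ that forces an unmatched letter in \emph{any} non-crossing matching of \emph{any} word containing $w$ --- does not exist, and this sinks the plan. For any finite word $w=w_1\cdots w_b$, the word $w_1\cdots w_b\,\bar{w}_b\cdots \bar{w}_1$ admits a \emph{complete} non-crossing matching (pair position $i$ with position $2b+1-i$), so in a suitable global context every letter of the block is matched: whether a letter can be matched is an inherently global question, and no local, additive charging scheme can certify a linear number of unmatched letters. Your windowing fallback fails for the same reason. Take $X=\alpha_1^{n/2}\bar{\alpha}_1^{n/2}$: every window of length $b$ inside either half is internally hopeless (internal $\ell_{NC}$ equal to $b$), yet the global matching is perfect, so summing internal $\ell_{NC}$ over windows overcounts by $\Theta(n)$. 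The cross-boundary correction you propose to subtract is not small: the pairs crossing a fixed boundary do form a single nested family, but that family can contain linearly many pairs, so the correction is itself $\Theta(n)$ and the accounting collapses. (Your other fallback --- subadditivity plus $L_k(b)>0$ for one $b$ --- you correctly note points the wrong way: Fekete's lemma gives $\lambda_k\le\rho(n)$, never a lower bound.) Two smaller slips feed the same misconception: $\ell_{NC}$ is \emph{not} the reduced word length in the free group (the commutator $\alpha\beta\bar{\alpha}\bar{\beta}$ has reduced length $4$ but $\ell_{NC}=2$, being one conjugate of a generator times another), and the letters strictly inside a matched pair need not cancel to the empty word --- only the \emph{matched} letters among them form a trivial word; unmatched letters may sit inside.

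The paper's proof is global rather than local: it bounds the \emph{number} $W(n,\delta)$ of words with $\ell(X)<n\delta$. Any such word is encoded by the set $s$ of its at most $n\delta$ unmatched positions, the short word $Y$ of unmatched letters, and the word $Z$ of matched letters; since $Z$ carries a complete non-crossing matching it represents the trivial element of $\FF$, and by Kesten's theorem the proportion of trivial words of length $2m$ is at most $\theta_k^{2m}$ with $\theta_k=\sqrt{(2k-1)/k^2}<1$. The resulting bound $W(n,\delta)\le \binom{n}{r}(2k)^r|T_{2m}|$ is exponentially smaller than $(2k)^n$ once $h(\delta)+\log\theta_k<0$, so $\P(\ell(X^{(n)})<n\delta)\to 0$ and hence $\lambda_k\ge\delta$ (for $k=2$, $\delta=0.03$ works). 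The exponential rarity of trivial words --- an entropy-versus-return-probability comparison --- is the ingredient for which your proposal has no substitute.
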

\begin{proof}

  Fix $\delta >0$. Observe that $$L(n) \geq n\delta\cdot \P(\ell(X^{(n)}) \geq n\delta),$$ and hence $$\rho(n) \geq \delta \P(\ell(X^{(n)}) \geq n\delta).$$ Thus, if we have $\P(\ell(X^{(n)}) \geq n\delta) \to 1$ as $n\to\infty$, then $\lambda_k\geq \delta$. Thus it suffices to find a $\delta>0$ for which we can show that $\P(\ell(X^{(n)}) \geq n\delta) \to 1$,  or equivalently show that $$\P(\ell(X^{(n)}) <  n\delta) \to 0$$ as $n\to\infty$.

  We shall now bound $\P(\ell(X^{(n)}) <  n\delta)$ for small enough $\delta$. Note that if $W(n, \delta)$ is the number of words $X$ of length $n$ with $\ell(X)< n\delta$, then $$\P(\ell(X^{(n)}) <  n\delta) = \frac{W(n, \delta)}{(2k)^n}.$$

  Let $m=\left\lceil\frac{n - n\delta}{2}\right\rceil$ and let $r = n - 2m$. Observe that if $\ell(X^{(n)})< n\delta$, then $X= X^{(n)}$ has a non-crossing matching with at least $2m$ pairs, and hence a non-crossing matching $M$ with exactly $2m$ pairs (by simply dropping a few pairs). Given such an $M$, we can associate to $X$ a triple $(Y, Z, s)$ where
  \begin{itemize}
    \item $Y$ is the word (of length $r$) consisting of the letters of $X$ that are  \emph{unmatched} in $M$, in the same order as in $X$,
    \item $Z$ is the word (of length $2m$) consisting of the letters of $X$ that are  \emph{matched} in $M$, in the same order as in $X$, and,
    \item $s$ is the set of indices $i$, $1 \leq i\leq n$, that are \emph{unmatched}.
  \end{itemize}

  Note that $M$ gives a \emph{complete non-crossing matching} on $Z$, and hence $Z$ represents the trivial word in $\FF$. As the triple $(Y, Z, s)$ determines $X$, it follows that the number $W(n, \delta)$ of words $X$ of length $n$ with $\ell(X)< n\delta$ is bounded above by the number of triples $(Y, Z, s)$, with
  \begin{itemize}
    \item $Y$ a word of length $r$,
    \item $Z$ a word of length $2m$ that represents the trivial element in the free group, and
    \item $s$ a subset of size $r$ of $\{1, 2, \dots,n \}$.
  \end{itemize}

  Let $T_p$ denote the set of words of length $p$ that represent the trivial element in the group $\FF$. It follows that
  \begin{equation}\label{triple-count}
    |W(n, \delta)| \leq \binom{n}{r}\cdot (2k)^r\cdot |T_{2m}|
  \end{equation}

  The main step remaining is to bound $T_{2m}$. Let $\tau_p = T_p/ (2k)^p$ represent the probability that a random word of length $p$ represents the trivial element in $\FF$. We observe that this is the probability that the standard symmetric random walk on the Cayley graph of the free group (with the canonical generators and their inverses) starting at the identity returns to the identity in $p$ steps. It is clear that $\tau_{p}\tau_{q}\le \tau_{p+q}$, and hence by the Fekete lemma (applied to $\log \tau_{p}$), we see that $\tau_{p}^{\frac{1}{p}}\to \theta_{k}:=\sup_{p}\tau_{p}^{\frac{1}{p}}$. This means that $\tau_{p}\le \theta_{k}^{p}$ for each $p\ge 1$. Of course $\tau_{p}=0$ for odd $p$.

  It is a known fact that $\theta_{k}= \sqrt{\frac{2k-1}{k^2}}$ (for example Kesten~\cite{Ke}). To see this, observe that the graph distance of the random walk to the identity element is itself a random walk on $\N=\{0,1,2,\ldots\}$ that goes from $i\mapsto i+1$ with probability $(2k-1)/2k$ and $i\mapsto i-1$ with probability $1/2k$, for $i\ge 1$, and from $0$ to $1$ with probability $1$. The number of walks of length $p=2m$ that return to the origin in $\N$ is the Catalan number $\frac{1}{m+1}\binom{2m}{m}$, and each such path (since it has $m$ up-steps and $m$ down-steps) has probability $(2k-1)^{m}/(2k)^{2m}$. Therefore,
  \begin{align*}
    \tau_{2m} & = \frac{(2k-1)^{m}}{(2k)^{2m}(m+1)}\binom{2m}{m}              \\
              & \sim \frac{1}{\sqrt{\pi}m^{\frac32}}\frac{(2k-1)^{m}}{k^{2m}}
  \end{align*}
  by Stirling's formula, where $a_{m}\sim b_{m}$ means that $a_{m}/b_{m}$ converges to $1$ as $m\to \infty$. In particular, we see that $\tau_{2m}^{\frac{1}{2m}}\to \sqrt{\frac{2k-1}{k^2}}$. Hence $\theta_{k}=\sqrt{\frac{2k-1}{k^2}}$. In particular, $\theta_2 = \frac{\sqrt{3}}{2}$, which we use for explicit estimates on $\lambda_{2}$.

  %

  It is now straightforward to complete the proof. For simplicity of notation, we ignore the error in rounding off to an integer and assume $r = n\delta$. Using  the elementary fact that $\binom{n}{r}\le e^{nh(\delta)}$ where $h(\delta)= -\delta\log(\delta) - (1 - \delta)\log(1-\delta)$ in \eqref{triple-count}, we get
  $$\P(\ell(X^{(n)}) < n\delta)\leq \exp\left\{n\left(h(\delta)+ \log \theta_k ) \right) \right\}$$
  Hence $\P(\ell(X^{(n)}) < n\delta)\to 0$ as $n\to\infty$ provided $h(\delta)+ \log\theta_k < 0$.

  When $k = 2$, as $\theta_{2}=\sqrt{3}/2$, this  happens, for example, for $\delta = 0.03$. Thus, we have $\lambda_2 > 0.03$, i.e. at least 3\% of the letters are unmatched for the best non-crossing matching for most words.

  Next, suppose $k\to\infty$. We see that $\lambda_k\to\infty$.
  \begin{proposition}\label{prop:k-to-1}
    We have $\lim_{k\to\infty} \lambda_k = 1$.
  \end{proposition}
  \begin{proof}
    Observe that $\theta_{k}= \sqrt{\frac{2k-1}{k^2}}\to 0$ as $k\to\infty$, hence $\log(\theta_k)\to -\infty$. It follows that for any fixed $\delta\in (0, 1)$,
    if $k$ is sufficiently large we have $h(\delta)+ \log\theta_k < 0$, hence $\lambda_k > \delta$. As
    $\lambda_k\leq 1$ for all $k$, $\lim_{k\to\infty} \lambda_k = 1$.
  \end{proof}

  Thus, we have shown that the limit $\lambda_k$ of the sequence $\rho(n)$ exists and is positive.
  This completes the proof of Theorem~\ref{main}, with the effective bound $\lambda_2\geq 0.03$ for $k = 2$ (other effective bounds can be computed similarly).
\end{proof}


\subsection{Refinement of the lower bound for \texorpdfstring{$\lambda_{2}$}{L2} using maximal triples}

We can refine the bound we obtained by choosing the triple $(Y, Z, s)$ in a canonical way (note that we do not, however, choose a canonical non-crossing matching on $Z$). Namely we try to match letters with as low indices as possible among all minimal non-crossing matchings. We fix $k = 2$ (so $\FF = \F$) in this subsection.

First, observe that for fixed $X$, the words $Y$ and $Z$ are determined by $s$. More generally, given $X$, any subset $s\subset [n]$ determines words $Y= Y(X, s)$ and $Z= Z(X, s)$, but in general the word $Z(X, s)$ may not represent the trivial element in $\FF$. We shall say the triple $(Y(s), Z(s), s)$ determined by $s$ (and $X$) is \emph{admissible} provided $Z$ represents the trivial element.

The set $s$ can be viewed as a finite sequence by ordering its elements lexicographically, and two such sets can be compared using the lexicographic ordering on finite sequences, which is a total ordering. We order admissible triples $(Y, Z, s)$ by the component $s$ and choose the maximal admissible triple for each fixed $X$.

We can decompose $s$ as $s = s_1\cup s_2$, with $s_2$ (the {tail}) consisting of those elements $i\in s$ such that if $j\in [n]\setminus s$, then $j < i$. Conversely, given an element $i \in s_1$ there exists $j\in [n]\setminus s$ such that $j > i$. For $i\in s_1$, let $\hat{i}$ be the \emph{smallest} element in $[n]\setminus s$ such that $\hat{i} > i$, i.e., $\hat{i}$ is the first matched index after the unmatched index $i$. Geometrically, the unmatched indices $s$ are in general interspersed with the matched indices, with a (possibly empty) tail $s_2$ of unmatched indices which are larger than all matched indices.

We claim that if $(X, Y, s)$ is maximal and $i\in s_1$, then $X_i\neq X_{\hat{i}}$. For, if $X_i = X_{\hat{i}}$, let $s' = s \setminus\{i\}\cup\{\hat{i}\}$, $Y'= Y(X, s')$  and $Z' = Z(X, s')$. Then $Z' = Z$ as words in the free group, as the letter $X_{\hat{i}}$ in $Z$ has been replaced by $X_i = X_{\hat{i}}$ in $Z'$, and in the order on indices, $i$ has the same position in $Z'$ as $\hat{i}$ has in $Z$ (this is because, if $j\in [n]\setminus(s\cup s')$ is an index in both $Z$ and $Z'$, then $j\leq i$ if and only if $j\leq \hat{i}$ by definition of $\hat{i}$). Hence $Z'$ represents the trivial word. Hence the triple $(Y', Z', s')$ is admissible, and $s$ and $s'$ have the same cardinality. But $s < s'$, contradicting maximality of $s$.

Thus, writing $Y = Y_1 * Y_2$ with $Y_i$ the word with letters $X_j$, $j\in s_i$, and letting $r_i$ be the cardinality of $s_i$, we see that there are only $3^{r_1}4^{r_2}$ possibilities for the word $Y$ (corresponding to a maximal triple). On the other hand, the set $s_2$ is determined by $r_2$ as it consists of the last $r_2$ elements, and $s_1$ is a subset of size $r_1$ of the first $n - r_2 = n - r + r_1$ elements.

Hence, using \eqref{triple-count} once more and recalling that $\theta_{2}=\sqrt{3}/2$, we see that
$$W(n, \delta)\leq \left(\sum\limits_{r_1 = 0}^r \binom{n - r + r_1}{r_1}3^{r_1}4^{r - r_1}\right)\left(\frac{\sqrt{3}}{2}\right)^{n-r} 4^{n - r}.$$

We use $\binom{n - r + r_1}{r_1}\leq \binom{n}{r_1}$ and the Chernoff bound for the tail of the binomial distribution to get
\begin{align*}
  \sum\limits_{r_1 = 0}^r \binom{n}{r_1}3^{r_1}4^{n - r_1} & \leq 7^n\exp\left\{-n\left(\delta\log\left(\frac{\delta}{3/7}\right) +
  (1 - \delta)\log\left(\frac{1-\delta}{4/7} \right)\right) \right\}                                                                \\
                                                           & = \exp\{n[h(\delta)+\delta \log 3 + (1-\delta)\log 4]\}.
\end{align*}
Therefore, we get the improved bound
\begin{align*}
  \P(\ell(X^{(n)}) < n\delta) & \leq |W(n,\delta)|4^{-n}                                                 \\
                              & \le  \exp\{n[h(\delta)+\delta \log(3/4)+(1-\delta) \log(\sqrt{3}/2)] \}.
\end{align*}
However the improved lower bound only gives a marginal improvement to $0.034$, i.e., at least 3.4\% of the letters are unmatched on average.

\section{An elementary upper bound on \texorpdfstring{$\lambda_2$}{L2}}\label{S:upperbound}
We claim that $\lambda_2\le 0.29$ (we shall use more sophisticated methods to obtain a better bound in Section~\ref{Sec:greedy}).  This is achieved as follows. Let $U_{1},V_{1},U_{2},V_{2},\ldots $ be i.i.d. Geometric($1/2$) random variables, i.e.,  $\P[U_{1}=j]=2^{-j}$ for $j\ge 1$. Then $\E[U_{1}]=2$, and hence with $m=\lfloor n/4\rfloor$ we get $N_{n}:=U_{1}+V_{1}+\ldots +U_{m}+V_{m}=n+O(\sqrt{n})$ with high probability. We create a string $S\in \{\alpha,\bar{\alpha},\beta,\bar{\beta}\}^{N}$ by setting down a random string of $\{\alp,\bar{\alp}\}$ of length $U_{1}$, then a random string of $\{\beta,\bar{\beta}\}$ of length $V_{1}$, etc. Thus, $U_{i},V_{i}$ are the length of runs of the two species of symbols. This makes the length of the string random but since it is in a $\sqrt{n}$ length window of $n$, this should not change anything much (as regards the proportion of unpaired sites). Consider the following matching algorithm.

Fix any maximal noncrossing matching of all the $\beta,\bar{\beta}$ symbols.  Then we make the best possible non-crossing matching of each run of $\alpha,\bar{\alpha}$ within itself.  Thus, if the first run happens to be $\alpha,\bar{\alpha},\alpha$, then, we could match up the first two sites and leave the third one unpaired.

In this matching scheme, in the first stage there are $O(\sqrt{n})$ unpaired sites (the difference between the number of $\beta$ and the number of $\bar{\beta}$ symbols in $S$). For the second stage, note that in the $j$th run (the one that has length $U_{j}$), the number of $\alpha$-symbols is $\xi_{j}\sim \mb{Binomial}(U_{j},\frac{1}{2})$, and hence the number of left overs is $|2\xi_{j}-U_{j}|$. The total number of left over sites has expectation $m\E[|2\xi_{1}-U_{1}|] + O(\sqrt{n})$ which gives us the bound
\begin{align*}
  \lambda_2\le \frac{1}{4}\E[|2\xi_{1}-U_{1}|].
\end{align*}
Numerical evaluation of the expectation (expressed as an infinite sum) gives the bound $\lambda_2\le 0.2886...$.

\section{Concentration around expected behaviour}\label{sec:concentration}
We prove Proposition~\ref{prop:concentration} in this section.
The tool is the well-known Hoeffding's inequality for sums of martingale differences (see section 4.1 of Ledoux~\cite{ledoux} for a proof and the book of Steele~\cite{steele} for its use in many combinatorial optimization problems similar to ours). It says that if  $d_{1},d_{2},\ldots ,d_{n}$ is a martingale difference sequence, that is $\E[d_{j}\vert d_{1},\ldots d_{j-1}]=0$ for each $j$ (for $j=1$ this is to be interpreted as $\E[d_{1}]=0$) and $|d_{j}|\le B_{j}$ with probability $1$ for some constant $B_{j}$, then for any $t>0$, we have
\begin{equation} \label{eq:hoeffding}
  \P\l[\big| \sum_{j=1}^{n}d_{j} \big| >t \r]\le 2e^{-\frac{t^{2}}{2(B_{1}^{2}+\ldots +B_{n}^{2})}}.
\end{equation}

\begin{proof}[Proof of Proposition~\ref{prop:concentration}] Let $X=X^{(n)}=(X_{1},\ldots ,X_{n})$. Define for $j=1,\ldots n$,
  $$
    d_{j} = \E\l[L(X_{1},\ldots ,X_{n}) \given X_{1},\ldots ,X_{j} \r] - \E\l[L(X_{1},\ldots ,X_{n}) \given X_{1},\ldots ,X_{j-1} \r].
  $$
  Then $d_{j}$ is a martingale difference sequence by the tower property
  \begin{align*}
    \E[\E[U \given V,W]\given W]=\E[U\given W].
  \end{align*}
  Further, $L(X)-\E[L(X)] = d_{1}+\ldots +d_{n}$. If we show that $|d_{j}|\le 2$, then by applying Hoeffding's inequality~\eqref{eq:hoeffding}, we get the statement in the lemma.

  To prove that $|d_{j}|\le 2$, fix $j$ and let $Y=(X_{1},\ldots ,X_{j-1},X_{j}',X_{j+1},\ldots ,X_{n})$ where $X_{j}'$ is an independent copy of $X_{j}$ that is also independent of all $X_{j}$s. Then,
  $$
    \E[L(Y)\given X_{1},\ldots ,X_{j}] \; =\;  \E[L(Y)\given X_{1},\ldots ,X_{j-1}]  \; =  \; \E[L(X)\given X_{1},\ldots ,X_{j-1}],
  $$
  where the first equality holds because $X_{j}$ is independent of $Y$ and the second equality holds because $X_{1},\ldots X_{j-1}$ bear the same relationship to $X$ as to $Y$. Thus, we conclude that
  $$
    d_{j} = \E\l[L(X)-L(Y) \given X_{1},\ldots X_{j} \r].
  $$
  But $X$ and $Y$ differ only in one co-ordinate. From any non-crossing matching of $X$, by deleting the edge (if any) matching the $j$th co-ordinate, we obtain a non-crossing matching for $Y$ with at most two more unmatched indices. Therefore $L(Y)\le L(X)+2$ and by symmetry between $X$ and $Y$, we get $|L(X)-L(Y)|\le 2$. Therefore $|d_{j}| \le \E\l[|L(X)-L(Y)| \given X_{1},\ldots X_{j} \r] \le 2$.
\end{proof}

\section{Greedy algorithms and Upper bounds}\label{Sec:greedy}
%

The goal of this section is to prove Proposition~\ref{prop:onesidedgreedy}. First we introduce a Markov chain related to this algorithm. Recall the description of the algorithm from the introduction.

\subsection{The associated Markov chain and its stationary distribution} For simplicity of notation, we write the alphabet set as $\mathbb A=\{1,\bar{1},\ldots ,k,\bar{k}\}$. Let $w[t]$ be the word formed by all the {\em accessible} letters at ``time'' $t$ -- these are the letters among $X_1,\ldots ,X_t$ that are   still available for matching in future in the above greedy algorithm. Then $w[t]$ is a Markov chain whose state space is  $\Omega=\mathbb A^0\sqcup \mathbb A^1\sqcup \mathbb A^2\sqcup \ldots$, the set of all finite strings in the  alphabet $\mathbb A$ (including the empty string) and whose dynamics  are as follows:

If $w[t]=(w_{1},\ldots ,w_{p})$ and $X_{t+1}=x$, then $w[t+1]=(w_1,\ldots ,w_p,x)$ if $\bar{x}$ does not occur in $w[t]$. Otherwise  $w[t+1]=(w_{1},\ldots ,w_{j-1})$ where $j$ is the largest index such that $w_j=\bar{x}$. Two letters get matched each time the length of $w[t]$ reduces. Hence the number left unmatched after $n$ steps is $n-2\sum_{t=2}^n\mathbf 1_{\mb{\tiny length}(w[t])<\mb{\tiny length}(w[t-1])}$.

The Markov chain  is not irreducible. From any state it is possible to go to $\emptyset$ but from $\emptyset$ the chain can only go to states in
\[
  \Omega_0=\{w\in \Omega \; : \; \mb{ at most one of }x,\bar{x}\in w\mb{ for each }x\in \mathbb A\}
\]
which makes $\Omega_0$ the unique irreducible class. As we shall show next, this Markov chain has a stationary probability distribution $\pi$. By the general theory of Markov chains, the stationary distribution is unique. To give the formula for $\pi$, we need some notation.

For a word $w\in \Omega_0$, define $a_i(w)$ inductively by declaring $a_1(w)+\ldots +a_j(w)$ to be the length of the maximal initial segment in $w$ (reading from the left) containing at most $j$ distinct symbols. Note that if $w$ has only $j$ different symbols from $\mathbb A$, it follows that $a_i(w)=0$ for $i\ge j$. In particular, as $w\in \Omega_0$, it has at most $k$ distinct symbols.  For example, if $k=3$ and $w=11212212311232$, then $(a_1,a_2,a_3)=(2,8,6)$. If $w=22212$ then $(a_1,a_2,a_3)=(3,2,0)$. For the empty word, $a_i(w)=0$ for all $i$.

\begin{proposition}\label{prop:statdist} Fix $k\ge 2$. Let $\tau_j=\frac{j+1}{j(2k+1)-1}$ for $1\le j\le k$. Then the unique stationary probability distribution is given by
  \begin{align*}
    \pi(w)=\frac{1}{\mb{Z}}\tau_1^{a_1(w)}\tau_2^{a_2(w)}\ldots \tau_k^{a_k(w)}
  \end{align*}
  where $Z=\sum_{r=0}^k\binom{k}{r}2^r\prod_{j=1}^r\frac{j\tau_j}{1-j\tau_j}=\sum_{r=0}^k\binom{k}{r}2^r\prod_{j=1}^r\frac{j(j+1)}{j(2k-j)-1}$.
\end{proposition}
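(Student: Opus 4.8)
The plan is to verify the global balance equations for the unnormalised weight $\mu(w):=\prod_{i=1}^k\tau_i^{a_i(w)}$ on $\Omega_0$ and then identify the normalising constant with $Z$. Write $s=s(w)$ for the number of distinct symbols occurring in $w$ and $w^-$ for $w$ with its last letter deleted. The starting point is the elementary observation that appending one letter to a word $v$ with $s'$ distinct symbols multiplies $\mu$ by $\tau_{s'}$ when the letter repeats a symbol already present and by $\tau_{s'+1}$ when it introduces a new symbol; reading this backwards gives $\mu(w)=\tau_{s}\,\mu(w^-)$ in every case. Next I would enumerate the predecessors of a fixed $w$ of length $p\ge 1$. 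There are two kinds: the unique append-predecessor $w^-$ (reached by the letter $w_p$), and the pop-predecessors, which are exactly the words $(w,\bar x,u)$ for which $x\notin w$, the symbol $\bar x$ does not recur inside $u$, and $(w,\bar x,u)\in\Omega_0$. Since each transition is taken with probability $\tfrac1{2k}$, balance at $w$ is the identity
\[
2k\,\mu(w)=\mu(w^-)+\sum_{y:\ \bar y\notin w}\ \sum_{u}\mu\big((w,y,u)\big),
\]
where $y=\bar x$ runs over all symbols whose inverse is absent from $w$ and the inner sum is over admissible tails $u$ avoiding $y$.

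The crux is to evaluate the inner sum. Fixing $y$ and writing $J$ for the number of distinct symbols of $(w,y)$, the admissible tails $u$ are built by repeatedly either repeating a currently present symbol other than the single forbidden symbol $y$, or introducing a fresh symbol; each step multiplies $\mu$ by $\tau_J$ (repeat) or $\tau_{J+1}$ (fresh), and new symbols may subsequently be repeated. Hence $\sum_u\mu((w,y,u))=\mu((w,y))\,E_J$, where $E_J$ is the total weight of all tails starting from symbol-count $J$, independent of the remaining structure of $w$. Tracking only the count $J$, this quantity satisfies the first-step recursion
\[
E_J=1+(J-1)\tau_J\,E_J+2(k-J)\tau_{J+1}\,E_{J+1},\qquad E_k=\frac{1}{1-(k-1)\tau_k},
\]
the factor $J-1$ counting repeatable symbols and $2(k-J)$ counting fresh ones. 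Solving this backward recursion — the one genuinely computational step — gives the closed form $E_J=\dfrac{J(2k+1)-1}{J(J+1)}$, which I would confirm by induction using the simplification $\tau_{J+1}E_{J+1}=\tfrac1{J+1}$.

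With $E_J$ in hand the balance identity collapses. Splitting the outer sum according to whether $y$ repeats one of the $s$ present symbols (contributing $s\,\tau_s E_s\,\mu(w)$) or is one of the $2(k-s)$ fresh symbols (contributing $2(k-s)\,\tau_{s+1}E_{s+1}\,\mu(w)$), and using $\mu(w^-)=\tau_s^{-1}\mu(w)$, balance reduces to
\[
2k=\tfrac1{\tau_s}+s\,\tau_s E_s+2(k-s)\,\tau_{s+1}E_{s+1}.
\]
Feeding the $E_s$-recursion back in eliminates the last two terms and reduces this to $(1+\tau_s)E_s=2k+1-\tfrac1{\tau_s}$; substituting $\tau_s=\tfrac{s+1}{s(2k+1)-1}$ together with the closed form for $E_s$ verifies it identically. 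The empty word is the base case $s=0$, where only pop-predecessors occur and balance is the single relation $\tau_1E_1=1$, which holds since $\tau_1=\tfrac1k$ and $E_1=k$.

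Finally I would compute the normalisation $\sum_{w\in\Omega_0}\mu(w)$ directly, grouping words by their set of $r$ occurring symbols. Choosing the $r$ pairs and a sign for each accounts for the factor $\binom{k}{r}2^r$, and for a fixed symbol set the weight sums over all first-appearance orders and block lengths to $r!\prod_{j=1}^r\frac{\tau_j}{1-j\tau_j}=\prod_{j=1}^r\frac{j\tau_j}{1-j\tau_j}$, giving exactly the stated $Z$; the geometric series converge because $j\tau_j<1$ for $1\le j\le k$ when $k\ge2$. I expect the main obstacle to be the predecessor bookkeeping rather than the algebra: it is easy to overlook that the matched letter $x=\bar y$ may be the inverse of a symbol already present in $w$, so that each $w$ has infinitely many pop-predecessors, and the device of summing these via the auxiliary weights $E_J$ is precisely what makes the non-reversible balance equations tractable.
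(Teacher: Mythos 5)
Your proposal is correct, and it follows essentially the same strategy as the paper's proof: verify the global balance equations $2k\,\mu(w)=\sum_{w'\mapsto w}\mu(w')$ for the product weight $\mu(w)=\prod_j\tau_j^{a_j(w)}$, using the identical classification of predecessors (the unique append-predecessor $w^-$, plus the pop-predecessors split according to whether the popped symbol already occurs in $w$ or is fresh), and the identical grouping argument for the normalising constant $Z$. The only real difference is bookkeeping: where the paper writes the pop-predecessor contributions as explicit nested geometric series indexed by $j$ and $m_1,\ldots,m_{j+1}$ and then verifies balance by a telescoping manipulation linking the equations for $r$ and $r+1$ (anchored at $r=k$), you encapsulate those sums in the tail weight $E_J$, prove the closed form $E_J=\frac{J(2k+1)-1}{J(J+1)}$ (equivalently $\tau_J E_J=\frac1J$) by backward induction from $E_k$, and then the balance equation becomes a local identity at each $s$ — your $E_r$ is exactly the paper's inner sum divided by $1-(r-1)\tau_r$, so this is a tidier packaging of the same algebra rather than a different argument.
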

Assuming this proposition, we prove Proposition~\ref{prop:onesidedgreedy}.

\subsection{Proof of Proposition~\ref{prop:onesidedgreedy}}  From the earlier observation, the expected proportion of matched letters after $n$ steps is
\begin{align*}
  \frac{2}{n}\sum_{k=1}^n\P\{\mb{length}(w[t])<\mb{length}(w[t-1])\} \to 2\P_{\pi}\{\mb{length}(w[1])<\mb{length}(w[0])\}
\end{align*}
where the subscript $\pi$ is to indicate that $w[0]$ is sampled from $\pi$ (in the actual chain, we start with $w[0]=\emptyset$) and the convergence follows from the general theory of Markov chains which asserts that the distribution of $(w[t-1],w[t])$ (from any starting point) converges to the distribution of $(w[0],w[1])$ when $w[0]$ has distribution $\pi$. As a consequence, we arrive at the upper bound
\begin{align}\label{eq:boundforlambdakintermsofstatdist}
  \lambda_k\le 1-2\P_{\pi}\{\mb{length}(w[1])<\mb{length}(w[0])\}.
\end{align}
If $w$ has $r$ distinct symbols, then $a_r(w)>0 (=a_{r+1}(w))$ and its length gets reduced if and only if the next arriving letter can match up with one of them, i.e., with probability $\frac{r}{2k}$.  Further, for a given choice of strictly positive integers $a_1,\ldots ,a_r$, the number of words $w$ with $a_i(w)=a_i$ is precisely
\begin{align}\label{eq:numberofwforgivenai}
  2^rk(k-1)\ldots (k-r+1)2^{a_2-1}3^{a_3-1}\ldots r^{a_r-1}.
\end{align}
Here $2k-2i+2$ is for the choice of $i$th new symbol (the locations are determined by $a_1,\ldots ,a_r$) and the $a_j-1$ letters between the $j$th new symbol and $(j+1)$st new symbol each have $j$ choices, hence the factor of $j^{a_j-1}$.
Thus,
\begin{align*}
  \P_{\pi}\{\mb{length}(w[1])<\mb{length}(w[0])\} & =\frac{1}{2k Z}\sum_{r=1}^kr2^r\binom{k}{r}\sum_{a_i\ge 1: i\le r}\prod_{j=1}^r(j \tau_j)^{a_j} \\
                                                  & =\frac{1}{2k Z}\sum_{r=1}^kr2^r\binom{k}{r}\prod_{j=1}^r\frac{j\tau_j}{1-j\tau_j}.
\end{align*}
Substituting the value of $\tau_j$ given in the statement of the proposition,  \begin{align}\label{eq:boundforlambdak}
  \tilde{\lambda}_k= 1-\frac{1}{kZ}\sum_{r=1}^kr2^r\binom{k}{r}\prod_{j=1}^r\frac{j(j+1)}{j(2k-j)-1}.
\end{align}
Plugging in the expression for $Z$ given in Proposition~\ref{prop:statdist} completes the proof of  Proposition~\ref{prop:onesidedgreedy}. \hfill $\qed$

\noindent{\bf Case $k=2$:} This is the case we care most about. We see that $\tau_1=\frac12$ and $\tau_2=\frac13$ and $Z=13$. Hence $\pi(w)=\frac{1}{13}\l(\frac32\r)^{a_1(w)}\l(\frac13\r)^{\mb{ \tiny length}(w)}$ where $a_1(w)$ is the length of the first run (i.e., the maximum $j$ such that $w_1=w_2=\ldots =w_j$). Therefore, \eqref{eq:boundforlambdak} becomes
\[
  \tilde{\lambda}_2= 1-\frac{1}{2\times 13}\l( 4+16\r)=\frac{3}{13}=0.2307\ldots
\]

\subsection{Proof of Proposition~\ref{prop:statdist}} Let $\sigma(w)=\tau_1^{a_1(w)}\ldots \tau_k^{a_k(w)}$. If $w$ has $r$ distinct symbols, then $a_j\ge 1$ for $j\le r$ and $a_j=0$ for $j>r$. The number of words $w$ with given $a_1,\ldots ,a_r$ is given in \eqref{eq:numberofwforgivenai}. Hence the sum of $\sigma(w)$ over such $w$ is
\[
  2^r\binom{k}{r}\sum_{a_1,\ldots ,a_r\ge 1}\prod_{j=1}^r(j\tau_j)^{a_j}=2^r\binom{k}{r}\prod_{j=1}^r\frac{j\tau_j}{1-j\tau_j}.
\]
Sum over $r$ (including $r=0$) to get the given expression for $Z$.

It suffices to check that $\sigma$ satisfies the equations for the stationary distribution, since we know the uniqueness (up to scalar multiples) of stationary distribution. The general equations are
\[
  \sum_{w:w'\mapsto w}\sigma(w')=2k\sigma(w)
\]
where the notation $w'\mapsto w$ means that $w'$ can lead to $w$ in one step (in our Markov chain, a given $w'$ can lead to a given $w$ in at most one way, hence the transition probability is exactly $1/2k$). If $w=(w_1,\ldots ,w_p)$ has  exactly $r$ distinct symbols, then $a_r(w)>0=a_{r+1}(w)$, and   $\sigma(w)=\tau_1^{a_1(w)}\ldots \tau_r^{a_r(w)}$. The possible $w'$ are:
\begin{enumerate}
  \item $w'=(w_1,\ldots ,w_{p-1})$. Then $a_i(w')=a_i(w)$ for $i\le r-1$ and $a_r(w')=a_r(w)-1$.
  \item $w'=wxy^1t_1y^2\ldots t_jy^{j+1}$ where $x\in \mathbb A$ is a symbol that occurs in $w$ and $t_i\in \mathbb A$ are the new symbols that did not occur before and $y^i=(y_{1}^i,\ldots,y^i_{m_i})$ with $m_i\ge 0$ . Here $j$ can vary from $0$ to $k-r$. Further, $x$ should not occur in $y^1t_1y^2\ldots t_jy^{j+1}$ so that $w'$ can lead to $w$ when an $\bar{x}$ arrives (it is tacit that all our words are in $\Omega_0$, so we do not write  those conditions again). Then
        \[
          a_{i}(w')=\begin{cases} a_i(w) & \mb{ if }i\le r-1, \\ a_r(w)+m_1+1 &\mb{ if }i=r, \\ m_{i-r+1}+1 &\mb{ if }r+1\le i\le r+j. \end{cases}
        \]
        For given $j$ and $m_1,\ldots ,m_j$, the number of choices of such $w'$ is
        \[
          2^j(k-r)(k-r-1)\ldots (k-r-j+1) \times r (r-1)^{m_1}r^{m_2}\ldots (r+j-1)^{m_{j+1}}.
        \]
        This is because there are $2k-2r-2i+2$ choices for $t_i$ and $r+i-2$ choices for each letter in $y^i$.

  \item $w'=wt_1y^1t_2y^2\ldots t_jy^{j}$ where  $y^i=(y_{1}^i,\ldots,y^i_{m_i})$ with $m_i\ge 0$ and $t_i\in \mathbb A$ are the new symbols that did not occur before. Here $j$ can vary from $1$ to $k-r$. Further, $t_1$ should not occur in $y^1t_2y^2\ldots t_jy^{j}$. Then
        \[
          a_{i}(w')=\begin{cases} a_i(w) & \mb{ if }i\le r,  \\ m_{i-r}+1 &\mb{ if }r+1\le i\le r+j. \end{cases}
        \]
        For given $j$ and $m_1,\ldots ,m_j$, the number of choices of such $w'$ is
        \[
          2^j(k-r)(k-r-1)\ldots (k-r-j+1) \times r^{m_1}(r+1)^{m_2}\ldots (r+j-1)^{m_{j}}.
        \]
        Here $2k-2r-2i+2$ is the number of choices for $t_i$ and $r+i-1$ is the number of choices for each letter in $y^i$.
\end{enumerate}
Using these and cancelling common factors, the equation for stationary distribution becomes
\begin{align*}
  2k\tau_r & =1+\frac{r\tau_r^2}{1-(r-1)\tau_r}\sum_{j=0}^{k-r} 2^j(k-r)_{j\downarrow} \prod_{i=r+1}^{r+j}\frac{\tau_i}{1-(i-1)\tau_i}                                              \\
           & \;\;\;\;\;\;\;+\tau_r\sum_{j=1}^{k-r} 2^j(k-r)_{j\downarrow} \prod_{i=r+1}^{r+j}\frac{1}{1-(i-1)\tau_i}                                                                \\
           & =1+\frac{r\tau_r^2}{1-(r-1)\tau_r}+\l(\frac{r\tau_r^2}{1-(r-1)\tau_r}+\tau_r\r)\sum_{j=1}^{k-r} 2^j(k-r)_{j\downarrow} \prod_{i=r+1}^{r+j}\frac{\tau_i}{1-(i-1)\tau_i}
\end{align*}
which is the same as (empty products are interpreted as $1$)
\[
  (2k+1)\tau_r-1=\frac{\tau_r(\tau_r+1)}{1-(r-1)\tau_r}\sum_{j=0}^{k-r}  \prod_{i=r+1}^{r+j}\frac{2(k+1-i)\tau_i}{1-(i-1)\tau_i}, \;\;\; 0\le r\le k.
\]
Notice that the sum on the right is of the form
$1+u_{r+1}+u_{r+1}u_{r+2}+\ldots = 1+u_{r+1}\l( 1+u_{r+2}+u_{r+2}u_{r+3}+\ldots \r)$, and the quantity in brackets on the right occurs in exactly that form in the equation for $r+1$. Therefore, the above equation can be re-written for $r<k$ as
\[
  \frac{((2k+1)\tau_r-1)(1-(r-1)\tau_r)}{\tau_r(\tau_r+1)}=1+\frac{2(k-r)\tau_{r+1}}{1-r\tau_{r+1}}\frac{((2k+1)\tau_{r+1}-1)(1-r\tau_{r+1})}{\tau_{r+1}(\tau_{r+1}+1)}.
\]
Plugging in the stated values of $\tau_r$ and $\tau_{r+1}$, a short calculation shows that both sides are equal to $(2k+2-r)/r$, hence equality holds.

For $r=k$, the original equation is $(2k+1)\tau_k-1=\frac{\tau_k(\tau_k+1)}{1-(k-1)\tau_k}$ which is easily seen to be satisfied by $\tau_k=1/(2k-1)$.  This completes the proof. \hfill \qed

\begin{remark} Although the proof is more or less straightforward checking with some calculations, it hinged on having the form of the stationary distribution. All features of the stationary distribution, namely the product form with exponents being $a_i$s and the values of $\tau_i$s were arrived at by extensive checking on Mathematica software for several values of $k$, along with some guess work. On a computer, one must restrict to finite state space chains, and a natural restriction is to words of length at most $L$ (steps outside this are forbidden). If $\pi_L$ is the stationary distribution of this Markov chain, then not only does $\pi_L$ converge to $\pi$, but curiously $\pi_L(w)=\pi(w)$ for all $w$ of length $L-1$ or less!

\end{remark}

\bibliographystyle{amsplain}

\begin{thebibliography}{10}

  \bibitem{Ga1} Gadgil, S. \textit{Watson-Crick pairing, the Heisenberg group and Milnor invariants},  J. Math. Biol. 59, 123 (2009).

  \bibitem{Ga2} Gadgil, S. \textit{Conjugacy invariant pseudo-norms, representability and RNA secondary structures}, Indian J Pure Appl Math {\bf 42}, 225 (2011).

  \bibitem{RNA} Gesteland, R. F. and Cech T. R. and Atkins, J. F. \textit{The RNA world: the
    nature of modern RNA suggests a prebiotic RNA world}, Cold
  Spring Harbor Laboratory Press, 1993.

  \bibitem{Ke} Kesten, H. \textit{Symmetric Random Walks on Groups},
  Transactions of the American Mathematical Society \textbf{92}, 336--354.

  \bibitem{ledoux} Ledoux, M. \textit{The concentration of measure phenomenon}, Mathematical Surveys and Monographs, {\bf 89}, American Mathematical Society, Providence, RI, 2001.

  \bibitem{steele} Steele, J. M. \textit{Probability theory and combinatorial optimization}, CBMS-NSF Regional Conference Series in Applied Mathematics, {\bf 69}, Society for Industrial and Applied Mathematics (SIAM), Philadelphia, PA, 1997.


\end{thebibliography}

\end{document}